\tikzset{node distance=3cm, auto}
\newtheorem{theorem}{Theorem}[section]
\newtheorem{lemma}[theorem]{Lemma}
\newtheorem{proposition}[theorem]{Proposition}
\newtheorem{corollary}[theorem]{Corollary}
\theoremstyle{definition}
\newtheorem{definition}[theorem]{Definition}
\newtheorem{example}[theorem]{Example}
\newtheorem{examples}[theorem]{Examples}
\newtheorem{remark}[theorem]{Remark}
\numberwithin{equation}{section}
\def\R{\mathbb{R}}
\def\C{\mathbb{C}}
\def\T{\mathbb{T}}
\def\D{\mathbb{D}}
\def\Re{\mathrm{Re}}
\def\co{\mathrm{co}}
\def\abco{\mathrm{abco}}
\def\diam{\mathrm{diam}}
\def\lin{\mathrm{lin}}
\def\At{\mathrm{At}}
\def\Ext{\mathrm{Ext}}
\def\NA{\mathrm{NA}}
\begin{document}

\title[The Bishop--Phelps--Bollob\'as property for weighted holomorphic mappings]{The Bishop--Phelps--Bollob\'as property \\ for weighted holomorphic mappings}

\author[A. Jim{\'e}nez-Vargas]{A. Jim{\'e}nez-Vargas}
\address[A. Jim{\'e}nez-Vargas]{Departamento de Matem{\'a}ticas, Universidad de Almer{\'i}a, 04120, Almer{\'i}a, Spain}
\email{ajimenez@ual.es}

\author[M. I. Ram\'{i}rez]{M. I. Ram{\'\i}rez}
\address[M. I. Ram\'{i}rez]{Departamento de Matem{\'a}ticas, Universidad de Almer{\'i}a, 04120, Almer{\'i}a, Spain}
\email{mramirez@ual.es}

\author[Mois\'es Villegas-Vallecillos]{Mois\'{e}s Villegas-Vallecillos}
\address[Mois\'es Villegas-Vallecillos]{Departamento de Matem\'{a}ticas, Universidad de C\'{a}diz, 11510 Puerto Real, Spain}
\email{moises.villegas@uca.es}

\date{\today}

\subjclass[2020]{46B20, 46E50, 46T25}
%46B20 Geometry and structure of normed linear spaces
%46E50 Spaces of differentiable or holomorphic functions on infinite-dimensional spaces
%46T25 Holomorphic maps in nonlinear functional analysis
\keywords{Weighted holomorphic function, Bishop--Phelps--Bollob\'as property, norm attaining operator}

\thanks{Corresponding author: A. Jim\'enez-Vargas (ajimenez@ual.es)}

\begin{abstract}
Given an open subset $U$ of a complex Banach space $E$, a weight $v$ on $U$ and a complex Banach space $F$, let $H^\infty_v(U,F)$ denote the Banach space of all weighted holomorphic mappings from $U$ into $F$, endowed with the weighted supremum norm. We introduce and study a version of the Bishop--Phelps--Bollob\'as property for $H^\infty_v(U,F)$ ($WH^\infty$-BPB property, for short). A result of Lindenstrauss type with sufficient conditions for $H^\infty_v(U,F)$ to have the $WH^\infty$-BPB property for every space $F$ is stated. This is the case of $H^\infty_{v_p}(\D,F)$ with $p\geq 1$, where $v_p$ is the standard polynomial weight on $\D$. The study of the relations of the $WH^\infty$-BPB property for the complex and vector-valued cases is also addressed as well as the extension of the cited property for mappings $f\in H^\infty_v(U,F)$ such that $vf$ has a relatively compact range in $F$.
\end{abstract}
\maketitle

%%%%%%%%%%%%%%%%%%%%%%%%%%%%%%%%%%%%%%%%%%%%%%%%%%%%%%%%%%%%%%%%%%%%%%%%%%%%%%%%%%%%%%%%%%%%%%%%%%%%%%%%%%%%%%%%%%%%%%%%%%%%%%%%%%%%%%%%%%%%%%%%%%%%%%%%%%%%%%%%%%%%%%%%%%%%%%%%%%%%%%%%%%%%%%%%%%%%%%%%%%%%%

\section*{Introduction and preliminaries}\label{s0}

Bishop--Phelps Theorem \cite{BisPhe-61} states that every continuous linear functional on a Banach space can be approximated by a norm attaining linear functional. A strengthening of Bishop--Phelps Theorem, known as Bishop--Phelps--Bollob\'as Theorem \cite{Bol-70}, assures that, in addition, a point where the approximated functional almost attains its norm can be approximated by a point at which attains its norm. Acosta, Aron, Garc\'ia, and Maestre \cite{AcoAroGarMae-08} initiated the study of the Bishop--Phelps--Bollob\'as property for bounded linear operators between Banach spaces. 
%Lindenstrauss \cite{Lin-63} and Acosta, Aron, Garc\'ia and Maestre \cite{AcoAroGarMae-08} initiated the study of the Bishop--Phelps property and the Bishop--Phelps--Bollob\'as property for bounded linear operators between Banach spaces, respectively. 

Let $E$ and $F$ be Banach spaces and let $L(E,F)$ be the Banach space of all bounded linear operators from $E$ into $F$, endowed with the operator canonical norm. In particular, $E^*$ stands for the space $L(E,\mathbb{K})$. As usual, $B_E$ and $S_E$ denote the closed unit ball and the unit sphere of $E$, respectively. 

Let us recall (see \cite{AcoAroGarMae-08,Aco-19}) that the pair $(E,F)$ \emph{has the Bishop--Phelps--Bollob\'as property} if for any $0<\varepsilon<1$, there exists $0<\eta(\varepsilon)<\varepsilon$ such that for every operator $T\in S_{L(E,F)}$ and every point $x\in S_E$ such that $\left\|T(x)\right\|>1-\eta(\varepsilon)$, there exist $T_0\in S_{L(E,F)}$ and $x_0\in S_E$ satisfying $\left\|T_0(x_0)\right\|=1$, $\left\|T-T_0\right\|<\varepsilon$ and $\left\|x-x_0\right\|<\varepsilon$. 

A vast research on this topic has been carried out over time. The survey \cite{DanGarMaeRol-22} by Dantas, Garc\'ia, Maestre, and Rold\'an provides an overview from 2008 to 2021 about the Bishop--Phelps--Bollob\'as property in several directions: for operators, some classes of operators and multilinear forms (see also the survey \cite{Aco-19} by Acosta for these three lines), for homogeneous polynomials, for Lipschitz mappings and for holomorphic functions. 

In \cite[p. 539]{DanGarMaeRol-22}, it is stated that little is known about the Bishop--Phelps--Bollob\'as property for holomorphic mappings and it is suggested that its study deserves special attention. In this direction, non-linear versions of Bishop--Phelps--Bollob\'as Theorem were established for some classes of holomorphic functions by Dantas, Garc\'ia, S.K. Kim, U.Y. Kim, Lee and Maestre \cite{DanGarKimKimLeeMae-19} and by S.K. Kim and Lee \cite{KimLee-19}, and for operators between spaces of bounded holomorphic functions by Bala, Dhara, Sarkar and Sensarma \cite{BaDhaSarSen-23}. 

Motivated also by some results obtained in \cite{Jim-23} about the density of norm attaining weighted holomorphic mappings on open subsets of $\C^n$, our aim in this paper is to address the Bishop--Phelps--Bollob\'as property for weighted holomorphic mappings under a different approach. 

Let $E$ and $F$ be complex Banach spaces and let $U$ be an open subset of $E$. Let $H(U,F)$ denote the space of all holomorphic mappings from $U$ into $F$. A \textit{weight} $v$ on $U$ is a (strictly) positive continuous function on $U$.

The \textit{weighted space of holomorphic mappings} $H^\infty_v(U,F)$ is the Banach space of all mappings $f\in H(U,F)$ such that   
$$
\left\|f\right\|_v:=\sup\left\{v(z)\left\|f(z)\right\|\colon z\in U\right\}<\infty ,
$$ 
equipped with the \textit{weighted supremum norm} $\left\|\cdot\right\|_v$. Moreover, $H^\infty_{vK}(U,F)$ stands for the space of all mappings $f\in H^\infty_v(U,F)$ such that $vf$ has a relatively compact range in $F$. It is usual to write $H^\infty_v(U)$ instead of $H^\infty_v(U,\C)$. 

%The \textit{little weighted space of holomorphic mappings} $H^0_{v}(U,F)$ is the norm-closed linear subspace of $H^\infty_v(U,F)$ consisting of all those mappings $f$ so that $vf$ vanishes at infinity on $U$, that is, for every $\varepsilon>0$ there is a compact set $K\subseteq U$ such that $v(z)\left\|f(z)\right\|<\varepsilon$ for all $z\in U\setminus K$. It is usual to write $H^\infty_v(U)$ and $H^0_v(U)$ instead of $H^\infty_v(U,\C)$ and $H^0_v(U,\C)$, respectively. 

These spaces appear in the study of growth conditions of holomorphic functions. Some of the most important references about them are the works by Bierstedt, Bonet and Galbis \cite{BieBonGal-93} and Bierstedt and Summers \cite{BieSum-93}. For complete and recent information on such spaces, we refer the reader to the survey \cite{Bon-22} by Bonet and the references therein. 

Our approach requires the linearisation of weighted holomorphic mappings. This technique can be consulted in the works by Bonet, Domanski and Lindstr\"om \cite{BonDomLin-01} and Gupta and Baweja \cite{GupBaw-16}. See also the papers \cite{Muj-91} by Mujica for the case of bounded holomorphic mappings and \cite{BonFri-02} by Bonet and Friz for more general weighted spaces of holomorphic mappings.

The linearisation of functions has been employed to address similar problems in the setting of Lipschitz mappings by Cascales, Chiclana, Garc\'ia-Lirola, Mart\'in and Rueda-Zoca \cite{CasChiGarMarRue-19} and Chiclana and Mart\'in \cite{ChiMa-19}, and in the environment of holomophic mappings by Carando and Mazzitelli \cite{CarMaz-15} and the first author \cite{Jim-23}.

Since $B_{H^\infty_v(U)}$ is compact for the compact open topology $\tau_0$ by Ascoli's Theorem, it follows by Ng's Theorem \cite{Ng-71} that $H^\infty_v(U)$ is a dual Banach space and its predual, denoted $G^\infty_v(U)$, is defined as the space of all linear functionals on $H^\infty_v(U)$ whose restrictions to $B_{ H^\infty_v(U)}$ are $\tau_0$-continuous.%, endowed with the topology of uniform convergence on the set $B_{H^\infty_v(U)}$.  

For each $z\in U$, the \emph{evaluation functional} $\delta_z\colon H^\infty_v(U)\to\mathbb{C}$, defined by $\delta_z(f)=f(z)$ for $f\in H^\infty_v(U)$, is in $G^\infty_v(U)$. By \emph{an atom of $G^\infty_v(U)$} we mean an element of $G^\infty_v(U)$ of the form $v(z)\delta_z$ for $z\in U$. The set of all atoms in $G^\infty_v(U)$ will be denoted here by $\At_{G^\infty_v(U)}$. 

Given a Banach space $E$, a subset $N\subseteq B_{E^*}$ is said to be \textit{norming for $E$} if  
$$
\left\|x\right\|=\sup\left\{\left|x^*(x)\right|\colon x^*\in N\right\}\qquad (x\in E).
$$
Notice that $\At_{G^\infty_v(U)}$ is norming for $H^\infty_v(U)$ since
$$
\left\|f\right\|_v=\sup\left\{v(z)\left|f(z)\right|\colon z\in U\right\}=\sup\left\{\left|(v(z)\delta_z)(f)\right|\colon z\in U\right\}
$$
for every $f\in H^\infty_v(U)$.

%We will need the following linearisation theorem for $ H^\infty_v(U)$ and some of its applications. First, 
We now fix some notations. Given Banach spaces $E$ and $F$, we denote by $L((E,\mathcal{T}_E);(F,\mathcal{T}_F))$ the space of all continuous linear operators from $(E,\mathcal{T}_E)$ into $(F,\mathcal{T}_F)$, where $\mathcal{T}_E$ and $\mathcal{T}_F$ are topologies on $E$ and $F$, respectively. We will not write $\mathcal{T}_E$ whenever it is the norm topology of $E$. Hence $L(E,F)$ is the Banach space of all bounded linear operators from $E$ into $F$ with the canonical norm of operators. $K(E,F)$ is the norm-closed subspace of $L(E,F)$ consisting of all compact operators. As usual, $w^*$, $w$ and $bw^*$ denote the weak* topology, the weak topology, and the bounded weak* topology, respectively. $\Ext(B_E)$ represents the set of extreme points of $B_E$. $\D$ and $\T$ stand for the open unit ball and the unit sphere of $\mathbb{C}$, respectively. Given a set $A\subseteq E$, $\overline{\lin}(A)$, $\overline{\co}(A)$ and $\overline{\abco}(A)$ denote the norm-closed linear hull, the norm-closed convex hull and the norm-closed absolutely convex hull of $A$ in $E$. %It is easy to prove that $\overline{\abco}(A)=\overline{\co}(\overline{\D}A)=\overline{\co}(\T A)$.

\begin{theorem}\label{main-theo}\cite{BonDomLin-01,BonFri-02,GupBaw-16,Muj-91} 
Let $U$ be an open subset of a complex Banach space $E$ and $v$ be a weight on $U$.
\begin{enumerate}
\item $G^\infty_v(U)$ is a Banach space with the norm induced by $H^\infty_v(U)^*$ (in fact, a closed subspace of $H^\infty_v(U)^*$), and the evaluation mapping $J_v\colon H^\infty_v(U)\to G^\infty_v(U)^*$, given by $J_v(f)(\phi)=\phi(f)$ for $\phi\in G^\infty_v(U)$ and $f\in H^\infty_v(U)$, is an isometric isomorphism.
%\item The restriction mapping $R_v\colon G^\infty_v(U)\to H^0_v(U)^*$, given by $R_v(\phi)=\phi|_{ H^0_v(U)}$ for $\phi\in G^\infty_v(U)$, is an isometric isomorphism if and only if $B_{H^0_v(U)}$ is $\tau_0$-dense in $B_{H^\infty_v(U)}$.% for the compact open topology.
\item The mapping $\Delta_v\colon U\to G^\infty_v(U)$ defined by $\Delta_v(z)=\delta_z$ for $z\in U$, is in $H^\infty_v(U,G^\infty_v(U))$ with $\left\|\Delta_v\right\|_v\leq 1$. 
\item $B_{G^\infty_v(U)}$ coincides with $\overline{\abco}(\At_{G^\infty_v(U)})\subseteq H^\infty_v(U)^*$.
\item $G^\infty_v(U)$ coincides with $\overline{\lin}(\At_{G^\infty_v(U)})\subseteq H^\infty_v(U)^*$.
\item For every complex Banach space $F$ and every mapping $f\in H^\infty_v(U,F)$, there exists a unique operator $T_f\in L(G^\infty_v(U),F)$ such that $T_f\circ\Delta_v=f$. Furthermore, $||T_f||=\left\|f\right\|_v$.
\item The mapping $f\mapsto T_f$ is an isometric isomorphism from $H^\infty_v(U,F)$ onto $L(G^\infty_v(U),F)$ (resp., from $H^\infty_{vK}(U,F)$ onto $K(G^\infty_v(U),F))$. $\hfill\qed$
\end{enumerate}
\end{theorem}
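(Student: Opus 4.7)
The plan is to exploit Ng's Theorem as the central tool and then build everything else from the resulting predual structure plus a Hahn--Banach argument.

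For (1), I would observe that $\tau_0$ is a Hausdorff locally convex topology on $H^\infty_v(U)$ weaker than the norm topology and, by the hypothesis that $B_{H^\infty_v(U)}$ is $\tau_0$-compact via Ascoli, Ng's Theorem in \cite{Ng-71} applies directly: it yields that the space $G^\infty_v(U)$ of linear functionals on $H^\infty_v(U)$ whose restriction to $B_{H^\infty_v(U)}$ is $\tau_0$-continuous is a closed subspace of $H^\infty_v(U)^*$ and that the canonical evaluation $J_v$ is an isometric isomorphism onto its dual. For (2), the inequality $v(z)\|\delta_z\|_{G^\infty_v(U)}\leq 1$ is immediate from the definition, and the holomorphy of $\Delta_v$ follows from weak holomorphy: for every $\phi\in G^\infty_v(U)^*$, by (1) there is $f\in H^\infty_v(U)$ with $\phi\circ\Delta_v(z)=J_v(f)(\delta_z)=f(z)$, which is holomorphic; now the classical equivalence of weak and norm holomorphy for $F$-valued functions gives $\Delta_v\in H(U,G^\infty_v(U))$.

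For (3), one inclusion is part (2). For the reverse, suppose $\phi\in B_{G^\infty_v(U)}\setminus\overline{\abco}(\At_{G^\infty_v(U)})$; by the Hahn--Banach separation theorem in its complex form there exists $\Phi\in G^\infty_v(U)^*$ with $|\Phi(\psi)|\leq 1$ for all $\psi\in\overline{\abco}(\At_{G^\infty_v(U)})$ and $|\Phi(\phi)|>1$. Writing $\Phi=J_v(f)$ with $f\in H^\infty_v(U)$, the first condition reads $v(z)|f(z)|\leq 1$ for all $z\in U$, i.e.\ $\|f\|_v\leq 1$, so $|\Phi(\phi)|\leq\|\Phi\|\|\phi\|\leq 1$, a contradiction. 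Part (4) is an immediate consequence of (3) by taking linear spans and closures.

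For (5), I would define $T_f$ first on $\lin(\At_{G^\infty_v(U)})$ by the linear extension of $T_f(v(z)\delta_z):=v(z)f(z)$. Well-definedness and boundedness go together: for any finite linear combination $\gamma=\sum_i\alpha_i v(z_i)\delta_{z_i}$ and any $y^*\in B_{F^*}$, the function $y^*\circ f$ lies in $H^\infty_v(U)$ with $\|y^*\circ f\|_v\leq\|f\|_v$, so
\[
\Bigl|y^*\Bigl(\sum_i\alpha_i v(z_i)f(z_i)\Bigr)\Bigr|=\Bigl|\sum_i\alpha_i(v(z_i)\delta_{z_i})(y^*\circ f)\Bigr|\leq\|\gamma\|_{G^\infty_v(U)}\,\|f\|_v,
\]
and taking the supremum over $y^*$ gives both well-definedness (apply to the zero combination) and $\|T_f(\gamma)\|\leq\|f\|_v\|\gamma\|$. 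By density (part (4)) $T_f$ extends uniquely to $L(G^\infty_v(U),F)$ with $\|T_f\|\leq\|f\|_v$. The opposite inequality $\|f\|_v\leq\|T_f\|\|\Delta_v\|_v\leq\|T_f\|$ follows from $T_f\circ\Delta_v=f$ together with (2). Part (6) is then mostly bookkeeping: linearity and injectivity of $f\mapsto T_f$ are clear, surjectivity is $f:=T\circ\Delta_v$, and the compact case is obtained from the identity $T_f(\At_{G^\infty_v(U)})=\{v(z)f(z):z\in U\}$ together with (3): if $vf$ has relatively compact range, then $T_f(B_{G^\infty_v(U)})\subseteq\overline{\abco}(T_f(\At_{G^\infty_v(U)}))$ is relatively compact by Mazur's theorem, and conversely, compactness of $T_f$ forces relative compactness of $T_f(\At_{G^\infty_v(U)})$ since each atom lies in $B_{G^\infty_v(U)}$.

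The only genuinely delicate point is the application of Ng's Theorem in (1), which rests on having the hypotheses of that theorem cleanly in place (in particular the $\tau_0$-compactness of $B_{H^\infty_v(U)}$, quoted from Ascoli); everything afterwards is standard linearisation technology once (1), (2) and the density statement (4) are in hand.
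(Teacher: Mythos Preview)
Your proposal is correct and follows the standard linearisation argument that underlies the cited references. Note, however, that the paper does not actually prove this theorem: it is stated as a compilation of known results from \cite{BonDomLin-01,BonFri-02,GupBaw-16,Muj-91} and closed with a $\qed$ immediately after the statement. So rather than matching or diverging from the paper's proof, you have supplied the argument that the paper deliberately omits. Your route---Ng's Theorem for (1), weak-implies-strong holomorphy for (2), Hahn--Banach separation for (3), density extension for (5), and Mazur's theorem for the compact case in (6)---is precisely the approach one finds in Mujica's original work \cite{Muj-91} for the unweighted case and its weighted adaptations in the other references, so there is no methodological divergence to discuss.
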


In the case $v=1_U$ where $1_U(z)=1$ for all $z\in U$, we will simply write $H^\infty(U,F)$ (the Banach space of all bounded holomorphic mappings from $U$ into $F$, under the supremum norm) instead of $H^\infty_v(U,F)$, $H^\infty(U)$ in place of $H^\infty(U,\C)$ and, following Mujica's notation in \cite{Muj-91}, $G^\infty(U)$ instead of $G^\infty_v(U)$.

Let us recall that an operator $T\in L(E,F)$ is said \textit{to attain its norm} at a point $x\in S_E$ if $\left\|T(x)\right\|=\left\|T\right\|$. Usually, $\NA(E,F)$ denotes the set of all operators in $L(E,F)$ that attain their norms and, in particular, $\NA(E)$ stands for $\NA(E,\mathbb{K})$. 

We can introduce the following version of this concept for weighted holomorphic mappings. 

\begin{definition}
Let $U$ be an open subset of a complex Banach space $E$, let $v$ be a weight on $U$, let $F$ be a complex Banach space, and $f\in H^\infty_v(U,F)$. 
\begin{enumerate}
\item We say that $f$ \textit{attains its weighted supremum norm} if there exists a point $z\in U$ such that $v(z)\left\|f(z)\right\|=\left\|f\right\|_v$. We denote by $ H^\infty_{v\rm{NA}}(U,F)$ the set of all mappings $f\in H^\infty_v(U,F)$ attaining their weighted supremum norms. In particular, we write $ H^\infty_{v\NA}(U)$ instead of $ H^\infty_{v\NA}(U,\C)$.
\item We say that $f$ \textit{attains its weighted supremum norm on $ G^\infty_v(U)$} if its linearisation $T_f\in L( G^\infty_v(U),F)$ attains its operator canonical norm. The set of all mappings $f\in H^\infty_v(U,F)$ that attain their weighted supremum norms on $ G^\infty_v(U)$ is denoted by $ H^\infty_{v\NA}(G^\infty_v(U),F)$. In addition, we write $ H^\infty_{v\NA}(G^\infty(U))$ in place of $ H^\infty_{v\NA}(G^\infty(U),\C)$.
\end{enumerate}
\end{definition}

\begin{example}
It is clear that if $f$ attains its weighted supremum norm (at $z$), then $f$ attains its weighted supremum norm on $ G^\infty_v(U)$ (at $v(z)\delta_z$). The converse does not hold: for example, the function identity on $\D$ does not attain its supremum norm on $\D$, but it does on $ G^\infty(\D)$. %Indeed, if $f(z)=z$ for all $z\in\D$, we have $|f(z)|=|z|<1$ for all $z\in\D$ and thus $\left\|f\right\|_\infty\leq 1$, moreover $1-1/n\in\D$ and $1-1/n=|f(1-1/n)|\leq \left\|f\right\|_\infty\leq 1$ for all $n\in\mathbb{N}$ and hence $\left\|f\right\|_\infty=1$, however there does not exist $z_0\in\D$ such that $|z_0|=|f(z_0)|=1$. On the other hand, take any $z\in\D$ with $z\neq 0$ and denote $w=z/|z|$, we have that $\delta_w\in B_{G^\infty(\D)}$ and $1=|f(w)|=|T_f(\delta_w)|\leq \left\|T_f\right\|=\left\|f\right\|_\infty=1$. 
\end{example}

In view of the definition of the weighted supremum norm, a possible formulation of the Bishop--Phelps--Bollob\'as property in the setting of weighted holomorphic mappings could be the following.

\begin{definition}
Let $U$ be an open subset of a complex Banach space $E$, $v$ be a weight on $U$, and $F$ be a complex Banach space. We say that $ H^\infty_v(U,F)$ \emph{has the weighted holomorphic Bishop--Phelps--Bollob\'as property ($WH^\infty$-BPB property, for short)} if given $0<\varepsilon<1$, there is $0<\eta(\varepsilon)<\varepsilon$ such that for every $f\in S_{H^\infty_v(U,F)}$, every $\lambda\in\T$ and every $z\in U$ such that $v(z)\left\|f(z)\right\|>1-\eta(\varepsilon)$, there exist $f_0\in S_{H^\infty_v(U,F)}$, $\lambda_0\in\T$ and $z_0\in U$ such that $v(z_0)\left\|f_0(z_0)\right\|=1$, $\left\|f-f_0\right\|_v<\varepsilon$ and $\left\|\lambda v(z)\delta_z-\lambda_0 v(z_0)\delta_{z_0}\right\|<\varepsilon$. In this case, it is said that $H^\infty_v(U,F)$ \emph{has the $WH^\infty$-BPB property with function $\varepsilon\mapsto\eta(\varepsilon)$}.

If the preceding definition holds for a linear subspace $A^\infty_v(U,F)\subseteq H^\infty_v(U,F)$ (that is, $f$ and $f_0$ belong to $S_{A^\infty_v(U,F)}$), we say that $A^\infty_v(U,F)$ \emph{has the $WH^\infty$-BPB property}.
\end{definition}

%\begin{remark}
%The Bishop--Phelps--Bollob\'as property has been introduced in other settings In contrast with the 
%\end{remark}

%If $v$ is a weight on $U$ such that $v(z)\leq 1$ for all $z\in U$, then the constant function $1_U$ is in $H^\infty_v(U)$ with $\left\|1_U\right\|_v\leq 1$, therefore $|v(z)-v(z_0)|\leq ||\lambda v(z)\delta_z-\lambda_0 v(z_0)\delta_{z_0}||$ for any $z,z_0\in U$ and $\lambda,\lambda_0\in\T$, and thus the quantity $||\lambda v(z)\delta_z-\lambda_0v(z_0)\delta_{z_0}||$ involved in the preceding definition provides an estimation of the distance between the weights of $v$ at $z$ and $z_0$.

%Here are examples of weights $v$ on $\D$ satisfying that $v(z)\leq 1$ for all $z\in\D$. For any $p>0$:
%\begin{enumerate}
%	\item The \emph{polynomial weights} $v_p(z)=(1-|z|^2)^p$.
%	\item The \emph{exponential weights} $v_p(z)=\mathrm{exp}(-(1-|z|^2)^{-p})$.
	%\item The \emph{logarithmic weights} $v_p(z)=(1-\log(1-|z|^2))^{-p}$.
%\end{enumerate}

\begin{examples}
Let $\Omega\subseteq\C$ be a simply connected open set. If $\Omega=\C$, then $ H^\infty(\Omega)=\C$ by Liouville's Theorem and thus $ H^\infty(\Omega)$ has the $WH^\infty$-BPB property. %Since every $f\in H^\infty(\C)$ with $\left\|f\right\|_\infty=1$ is a constant function with modulus 1, we can take $f_0=f$ and $z_0$ any complex number.

If $\Omega\varsubsetneq\C$, we can suppose that $\Omega=\D$ by the Riemann Mapping Theorem. By the maximum modulus principle, we have $H_{\NA}^\infty(\Omega)=\C$, hence $H_{\NA}^\infty(\Omega)$ is not norm dense in $ H^\infty(\Omega)$ and, therefore, $ H^\infty(\Omega)$ fails the $WH^\infty$-BPB property.
\end{examples}

%Clearly, if $H^\infty_v(U,F)$ has the $WH^\infty$-BPB property, then $H^\infty_{v\NA}(U,F)$ is norm dense in $ H^\infty_v(U,F)$. 

%\begin{example}
%OJO Find an example such that $H^\infty_{v\NA}(U,F)$ is norm dense in $H^\infty_v(U,F)$ but $H^\infty_v(U,F)$ fails the $WH^\infty$-BPB property.
%\end{example}

%\begin{lemma}\label{lem-1}
%Let $U$ be an open subset of $\C^n$, let $v$ be a weight on $U$ and let $F$ be a complex Banach space. The following assertions are equivalent:
%\begin{enumerate}
%	\item $H^\infty_v(U,F)$ has the $WH^\infty$-BPB property.
%	\item Given $\varepsilon>0$ there is $\eta(\varepsilon)>0$ such that for every $S_f\in L( G^\infty_v(U),F)$ with $\left\|S_f\right\|=1$ and every $z\in U$ such that $\left\|S_f(v(z)\delta_z)\right\|>1-\eta(\varepsilon)$, there exist $S_g\in L( G^\infty_v(U),F)$ and $x\in U$ such that $\left\|S_g(v(x)\delta_x)\right\|=\left\|g\right\|_v=1$, $\left\|S_f-S_g\right\|<\varepsilon$ and $\left\|v(z)\delta_z-v(x)\delta_x\right\|<\varepsilon$. 
%\end{enumerate}
%\end {lemma}

%Let us recall that a subset $U$ of a complex Banach space $E$ is \textit{balanced} if $\lambda z\in U$ for all $z\in U$ and $\lambda\in\overline{\mathbb{D}}$. 

We now present the content of the paper. The main result of this paper assures in Section \ref{s1} that if $U$ is an open subset of a complex Banach space $E$ and $v$ is a weight on $U$ such that $\T\At_{G^\infty_v(U)}$ is a norm-closed set of uniformly strongly exposed points of $B_{G^\infty_v(U)}$, then $H^\infty_v(U,F)$ has the $WH^\infty$-BPB property for every complex Banach space $F$. This is the case of $H^\infty_{v_p}(\D,F)$ with $p\geq 1$, where $v_p$ is the \emph{polynomial weight} on $\D$ defined by $v_p(z)=(1-|z|^2)^p$ for all $z\in\D$. Our approach requires a foray into the study of the extremal structure of the unit closed ball of the space $G^\infty_{v_p}(U)$. This rich structure has been studied by Boyd and Rueda \cite{BoyRue-06} to develop the geometric theory of the space $H^\infty_v(U)$.

In Section \ref{s2}, we show that the $WH^\infty$-BPB property for mappings $f\in H^\infty_v(U,F)$ implies the $WH^\infty$-BPB property for functions $f\in H^\infty_v(U)$, and that the converse implication holds whenever the space $F$ enjoys the Lindenstrauss' property $\beta$. 

Finally, we devote Section \ref{s3} to state analogous results for the space $H^\infty_{vK}(U,F)$, and we also show that $H^\infty_{vK}(U,F)$ has the $WH^\infty$-BPB property whenever $H^\infty_{v}(U)$ enjoys this property and $F$ is a predual of a complex $L_1(\mu)$-space.

%%%%%%%%%%%%%%%%%%%%%%%%%%%%%%%%%%%%%%%%%%%%%%%%%%%%%%%%%%%%%%%%%%%%%%%%%%%%%%%%%%%%%%%%%%%%%%%%%%%%%%%%%%%%%%%%%%%%%%%%%%%%%%%%%%%%%%%%%%%%%%%%%%%%%%%%%%%%%%%%%%%%%%%%%%%%%%%%%%%%%%%%%%%%%%%%%%%%%%%%%%%%%

\section{Weighted spaces of holomorphic mappings with the $WH^\infty$-BPB property}\label{s1}

Following to Lindenstrauss \cite{Lin-63}, a Banach space $E$ is said to \textit{have the property A} if $\NA(E,F)$ is norm dense in $ L(E,F)$ for every Banach space $F$. To give a sufficient condition for a Banach space $E$ to enjoy the property A, the notion of a set of uniformly strongly exposed points of $B_{E}$ was considered in \cite{Lin-63}. 

Let $E$ be a complex Banach space. A point $x\in B_E$ is said to be an \textit{exposed point of $B_E$} if there exists a functional $f\in S_{E^*}$ such that $\mathrm{Re}(f(x))=1$ and $\mathrm{Re}(f(y))<1$ for all $y\in B_E$ with $y\neq x$. A point $x\in B_E$ is a \textit{strongly exposed point of $B_E$} if there exists a functional $f\in S_{E^*}$ such that $f(x)=1$ and satisfies the following condition: for every $0<\varepsilon<1$, there exists a $0<\delta<1$ such that if $y\in B_E$ and $\mathrm{Re}(f(y))>1-\delta$, then $\left\|y-x\right\|<\varepsilon$. 

%It is clear that every strongly exposed point $x\in B_E$ is exposed. Indeed, there is a $f\in S_{E^*}$ such that $f(x)=1$ and thus $\mathrm{Re}(f(x))=1$. Moreover, if $y\in B_E$ with $y\neq x$, we can find $\varepsilon$ such that $0<\varepsilon<\left\|x-y\right\|$, hence there exists a $\delta>0$ such that if $z\in B_E$ and $\mathrm{Re}(f(z))>1-\delta$, then $\left\|z-x\right\|<\varepsilon$. It follows that $\mathrm{Re}(f(y))\leq 1-\delta$ since otherwise we would have $\mathrm{Re}(f(y))>1-\delta$ and therefore $\left\|y-x\right\|<\varepsilon$, a contradiction.

%Note that every exposed point of $B_E$ is in $S_E$.  

Given $f\in S_{E^*}$ and $0<\delta<1$, the \emph{slice of $B_E$ associated to $f$ and $\delta$} is the set
$$
S(B_E,f,\delta)=\left\{x\in B_E\colon \mathrm{Re}(f(x))>1-\delta\right\}.
$$
A subset $S\subseteq S_E$ is said to be a \emph{set of uniformly strongly exposed points of $B_E$} if there exists a set of functionals $\{f_x\colon x\in S\}\subseteq S_{E^*}$ with $f_x(x)=1$ for every $x\in S$ such that, given $0<\varepsilon<1$, there is $0<\delta<1$ satisfying that $\diam(S(B_E,f_x,\delta))<\varepsilon$ for all $x\in S$. 
%that is, 
%$$
%\sup\left\{\diam(S(B_E,f_x,\delta))\colon x\in S\right\}<\varepsilon.%,
%$$
%Equivalently, if for every $\varepsilon>0$ there is $\delta'>0$ such that whenever $z\in B_E$ satisfies $\mathrm{Re}(f_x(z))>1-\delta'$ for some $x\in S$, then $\left\|x-z\right\|<\varepsilon$ (that is, all elements of $S$ are strongly exposed points with the same relation $\varepsilon$-$\delta$). 
In this case, it is said that \emph{$B_E$ is uniformly strongly exposed by the family of functionals $\{f_x\colon x\in S\}$}. 

%Note that a set $S\subseteq S_E$ of uniformly strongly exposed points of $B_E$ is a set of strongly exposed points of $B_E$. Indeed, $f_x(x)=1$ for every $x\in S$. Now, let $\varepsilon>0$. By hypothesis, there exists $0<\delta<1$ so that $\diam(S(B_E,f_x,\delta))<\varepsilon$ for all $x\in S$. If $y\in B_E$ and $\mathrm{Re}(f_x(y))>1-\delta$, then $y\in S(B_E,f_x,\delta)$ and thus $\left\|y-x\right\|\leq\diam(S(B_E,f_x,\delta))<\varepsilon$. This proves that the points of $S$ are strongly exposed. 

In \cite[Proposition 1]{Lin-63}, %(see also \cite[Remark 9]{Aco-91}), 
Lindenstrauss proved that if $E$ is a Banach space containing a set of uniformly strongly exposed points $S\subseteq S_E$ such that $B_E=\overline{\co}(S)$, then $E$ has the property A. In fact, a reading of its proof shows that for every Banach space $F$, the set
$$
\left\{T\in L(E,F)\colon \exists x\in\overline{S} \, | \, \left\|T(x)\right\|=\left\|T\right\|\right\}
$$
is norm dense in $ L(E,F)$. We will apply this result to prove the following. 

%If $U$ is a balanced open subset of a complex Banach space $E$, a weight $v$ on $U$ is said to be \emph{radial} if $v(\lambda z)=v(z)$ for all $\lambda\in\mathbb{T}$ and $z\in U$. %For instance, any weight defined by $v(x):=g(\|x\|)$, where $g:[0,\infty)\to[0,\infty)$ is continuous, is radial.

\begin{lemma}\label{prop-1}
Let $U$ be an open subset of a complex Banach space $E$ and let $v$ be a weight on $U$. Assume that $\T\At_{G^\infty_v(U)}$ is a norm-closed set of uniformly strongly exposed points of $B_{G^\infty_v(U)}$. Then $H^\infty_{v\NA}(U,F)$ is norm dense in $H^\infty_v(U,F)$ for every complex Banach space $F$. 
\end{lemma}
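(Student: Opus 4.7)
The plan is to leverage the linearisation isometric isomorphism from Theorem \ref{main-theo}(vi), which identifies $H^\infty_v(U,F)$ with $L(G^\infty_v(U),F)$ via $f\mapsto T_f$, and combine it with the refined Lindenstrauss density result recalled just before the lemma. First I would translate the statement: since $T_f(v(z)\delta_z)=v(z)f(z)$ and $\|T_f\|=\|f\|_v$, a mapping $f$ attains its weighted supremum norm at some $z\in U$ if and only if $T_f$ attains its operator norm at the atom $v(z)\delta_z$, equivalently at some element of $\T\At_{G^\infty_v(U)}$ (multiplication by $\lambda\in\T$ preserves both the norm of the functional and the modulus of its evaluation). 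Thus it suffices to show that operators in $L(G^\infty_v(U),F)$ attaining their norm at a point of $\T\At_{G^\infty_v(U)}$ form a norm-dense subset.

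Next I would apply Lindenstrauss's \cite[Proposition 1]{Lin-63} (in the sharper form stated above) to $G^\infty_v(U)$ with $S=\T\At_{G^\infty_v(U)}\subseteq S_{G^\infty_v(U)}$. Two hypotheses must be checked: that $S$ is a set of uniformly strongly exposed points of $B_{G^\infty_v(U)}$, which is given, and that $B_{G^\infty_v(U)}=\overline{\co}(S)$, which is immediate from Theorem \ref{main-theo}(iii) since $\overline{\abco}(\At_{G^\infty_v(U)})=\overline{\co}(\T\At_{G^\infty_v(U)})$. Lindenstrauss's result then yields that, for every complex Banach space $F$, the set
\[
\bigl\{T\in L(G^\infty_v(U),F)\colon \exists\,\phi\in\overline{S}\text{ with }\|T(\phi)\|=\|T\|\bigr\}
\]
is norm dense in $L(G^\infty_v(U),F)$.

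Finally, the hypothesis that $\T\At_{G^\infty_v(U)}$ is norm-closed gives $\overline{S}=S$, so the attaining point may be taken of the form $\lambda v(z)\delta_z$ with $\lambda\in\T$ and $z\in U$. Writing any such dense operator as $T_{f_0}$ under the isometric isomorphism, one has $\|f_0\|_v=\|T_{f_0}\|=\|T_{f_0}(\lambda v(z)\delta_z)\|=v(z)\|f_0(z)\|$, so $f_0\in H^\infty_{v\NA}(U,F)$, and the desired density follows. I do not anticipate a substantial obstacle: the argument is essentially a dictionary between the vector-valued and linearised pictures, and the norm-closedness hypothesis on $\T\At_{G^\infty_v(U)}$ is precisely tailored to ensure that the attaining point produced by Lindenstrauss's theorem corresponds to a genuine $z\in U$, rather than to an abstract limit in $G^\infty_v(U)$ that would leave us with a functional outside $\At_{G^\infty_v(U)}$.
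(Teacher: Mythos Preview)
Your proposal is correct and follows essentially the same approach as the paper: both arguments invoke Theorem~\ref{main-theo}(iii) to write $B_{G^\infty_v(U)}=\overline{\co}(\T\At_{G^\infty_v(U)})$, apply the refined version of Lindenstrauss's \cite[Proposition~1]{Lin-63} to obtain density of operators attaining their norm on $\overline{\T\At_{G^\infty_v(U)}}=\T\At_{G^\infty_v(U)}$, and then transfer the conclusion back through the isometric isomorphism $f\mapsto T_f$. Your exposition makes the role of the norm-closedness hypothesis slightly more explicit than the paper does, but the logic is identical.
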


\begin{proof}
By Theorem \ref{main-theo}, we have 
$$
B_{G^\infty_v(U)}=\overline{\abco}\left(\At_{G^\infty_v(U)}\right)=\overline{\co}\left(\T\At_{G^\infty_v(U)}\right).
$$
Therefore, for every Banach space $F$, the set
$$
\left\{T\in L( G^\infty_v(U),F)\colon \exists \phi\in \T\At_{G^\infty_v(U)} \, | \, \left\|T(\phi)\right\|=\left\|T\right\|\right\}
$$
is norm dense in $ L(G^\infty_v(U),F)$ by \cite[Proposition 1]{Lin-63}. It follows that $H^\infty_{v\NA}(U,F)$ is norm dense in $H^\infty_v(U,F)$. Indeed, let $\varepsilon>0$ and $f\in H^\infty_v(U,F)$. Consider $T_f\in L( G^\infty_v(U),F)$ by Theorem \ref{main-theo} and therefore there is $T\in L( G^\infty_v(U),F)$ with $\left\|T(\lambda v(z)\delta_z)\right\|=\left\|T\right\|$ for some $\lambda\in\T$ and $z\in U$ such that $\left\|T_f-T\right\|<\varepsilon$. By Theorem \ref{main-theo} again, $T=T_{f_0}$ for some $f_0\in H^\infty_v(U,F)$. Hence $f_0\in H^\infty_{v\NA}(U,F)$ since 
$$
\left\|f_0\right\|_v=\left\|T_{f_0}\right\|=\left\|T\right\|=\left\|T(\lambda v(z)\delta_z)\right\|=\left\|T_{f_0}(\lambda v(z)\delta_z)\right\|=v(z)\left\|f_0(z)\right\|.
$$
and, finally, note that 
$$
\left\|f-f_0\right\|_v=\left\|T_{f-f_0}\right\|=\left\|T_f-T_{f_0}\right\|=\left\|T_f-T\right\|<\varepsilon .
$$
\end{proof}

We are now ready to state the main result of this paper.

\begin{theorem}\label{teo-2}
Let $U$ be an open subset of a complex Banach space $E$ and let $v$ be a weight on $U$. Assume that $\T\At_{G^\infty_v(U)}$ is a norm-closed set of uniformly strongly exposed points of $B_{G^\infty_v(U)}$. Then $H^\infty_v(U,F)$ has the $WH^\infty$-BPB property for every complex Banach space $F$.
\end{theorem}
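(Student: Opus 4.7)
The plan is to linearise via Theorem \ref{main-theo} and run a quantitative Bishop--Phelps--Bollob\'as iteration \emph{\`a la} Lindenstrauss--Bollob\'as, exploiting the uniformly strongly exposed structure of the atoms. Given $0<\varepsilon<1$, $f\in S_{H^\infty_v(U,F)}$, $\lambda\in\T$ and $z\in U$ with $v(z)\|f(z)\|>1-\eta(\varepsilon)$, I would set $T_f\in S_{L(G^\infty_v(U),F)}$ and $\phi:=\lambda v(z)\delta_z\in\T\At_{G^\infty_v(U)}\subseteq S_{G^\infty_v(U)}$, and observe that $\|T_f(\phi)\|=v(z)\|f(z)\|>1-\eta(\varepsilon)$. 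Once I produce $T_0$ and $\phi_0=\lambda_0 v(z_0)\delta_{z_0}$ close to $T_f$ and $\phi$ respectively, with $\|T_0(\phi_0)\|=\|T_0\|=1$, the desired $f_0\in S_{H^\infty_v(U,F)}$ is recovered by inverting the linearisation ($T_{f_0}=T_0$), and the isometric identifications in Theorem \ref{main-theo} transfer all required norm estimates.

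The core intermediate statement I would establish is: under the standing hypothesis, for every complex Banach space $F$ and every $0<\varepsilon<1$ there exists $0<\eta(\varepsilon)<\varepsilon$ such that whenever $T\in S_{L(G^\infty_v(U),F)}$ and $\phi\in\T\At_{G^\infty_v(U)}$ satisfy $\|T(\phi)\|>1-\eta(\varepsilon)$, there exist $T_0\in S_{L(G^\infty_v(U),F)}$ and $\phi_0\in\T\At_{G^\infty_v(U)}$ with $\|T_0(\phi_0)\|=1$, $\|T-T_0\|<\varepsilon$ and $\|\phi-\phi_0\|<\varepsilon$. Fixing a family $\{g_\psi:\psi\in\T\At_{G^\infty_v(U)}\}\subseteq S_{G^\infty_v(U)^*}$ that uniformly strongly exposes the atoms, with modulus $\delta\colon(0,1)\to(0,1)$ satisfying $\diam(S(B_{G^\infty_v(U)},g_\psi,\delta(\rho)))<\rho$ for every $\psi$, I would carry out an inductive construction of sequences $(T_n)\subseteq L(G^\infty_v(U),F)$, $(\psi_n)\subseteq\T\At_{G^\infty_v(U)}$ and auxiliary Hahn--Banach functionals $(y_n^*)\subseteq S_{F^*}$, with each $T_{n+1}$ a small rank-one perturbation of $T_n$ in the direction dictated by $g_{\psi_n}$ and $y_n^*\circ T_n$. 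The perturbation scale is chosen so that $\psi_{n+1}$ falls inside a slice $S(B_{G^\infty_v(U)},g_{\psi_n},\delta_n)$ of very small diameter, forcing $(\psi_n)$ to be norm-Cauchy while keeping $\|T_n(\psi_n)\|$ close to $\|T_n\|$; a geometric choice of scales makes $(T_n)$ norm-Cauchy as well. The limit $T_0$ attains its norm at the limit point $\psi_0$, and $\psi_0\in\T\At_{G^\infty_v(U)}$ by the norm-closedness hypothesis. The quantitative closeness of $(T_0,\psi_0)$ to $(T,\phi)$ is then calibrated by fixing $\eta(\varepsilon)$ in terms of $\varepsilon$ and the modulus $\delta$.

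The main obstacle will be the careful execution of this iteration in the complex setting: unlike in Lemma \ref{prop-1}, where only density is needed, here I must track the phases $\lambda_n\in\T$ attached to the atoms so that $\Re(g_{\psi_n}(\psi_n))$ remains close to $1$ at each step, and I must verify that each newly chosen $\psi_{n+1}$ genuinely lies in the prescribed slice with the prescribed scale. It is precisely this delicate bookkeeping, together with an \emph{a priori} choice of the summable sequence of perturbation sizes in terms of $\delta$, that converts the qualitative density argument of Lemma \ref{prop-1} into a quantitative BPB statement with an explicit modulus $\eta(\varepsilon)$; and it is precisely the norm-closedness of $\T\At_{G^\infty_v(U)}$ that ensures the limit of the constructed atoms is again of the form $\lambda_0 v(z_0)\delta_{z_0}$, as required by the definition of the $WH^\infty$-BPB property.
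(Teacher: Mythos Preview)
Your proposal is correct, but the route differs from the paper's. You outline the classical Lindenstrauss iteration made quantitative: build sequences $(T_n)$, $(\psi_n)$ by successive rank-one perturbations along the exposing functionals, force $(\psi_n)$ to be Cauchy via uniform slice control, and pass to the limit using the norm-closedness of $\T\At_{G^\infty_v(U)}$. This is sound; the norming property $B_{G^\infty_v(U)}=\overline{\co}(\T\At_{G^\infty_v(U)})$ guarantees that at each step you can pick $\psi_{n+1}\in\T\At_{G^\infty_v(U)}$ nearly maximising $\|T_{n+1}(\cdot)\|$, and $\T$-invariance lets you rotate phases so that $\Re(g_{\psi_n}(\psi_{n+1}))$ rather than $|g_{\psi_n}(\psi_{n+1})|$ is close to $1$.

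The paper instead performs a \emph{single} perturbation followed by an appeal to Lemma~\ref{prop-1}. Given $f$, $\lambda$, $z$, it sets $g_0(y)=f(y)+\tfrac{\varepsilon}{4}\lambda f_{(\lambda,z)}(y)\,v(z)f(z)$ and proves the localisation claim: any $y\in U$ with $v(y)\|g_0(y)\|\geq v(z)\|g_0(z)\|$ satisfies $v(y)\delta_y\in\T\,S(B_{G^\infty_v(U)},J_v(f_{(\lambda,z)}),\delta)$. Then Lemma~\ref{prop-1} supplies a norm-attaining $f_0$ close to the normalised $g=g_0/\|g_0\|_v$; the point $z_0$ where $f_0$ attains automatically satisfies $v(z_0)\|g_0(z_0)\|\geq v(z)\|g_0(z)\|$, hence lies in the required slice, giving $\|\lambda v(z)\delta_z-\lambda_0 v(z_0)\delta_{z_0}\|<\varepsilon$. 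This two-step argument (one perturbation plus density as a black box) is shorter and more modular, whereas your iteration is self-contained and does not invoke Lemma~\ref{prop-1} separately, at the cost of heavier bookkeeping on the summable perturbation scales and the phase tracking you correctly flag as the main obstacle.
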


\begin{proof}
Let $0<\varepsilon<1$. Since $\T\At_{G^\infty_v(U)}$ is a set of uniformly strongly exposed points of $B_{G^\infty_v(U)}$, there exists a set $\{f_{(\lambda,z)}\colon \lambda\in\T,\, z\in U\}\subseteq S_{H^\infty_v(U)}$ with $\lambda v(z)f_{(\lambda,z)}(z)=J_v(f_{(\lambda,z)})(\lambda v(z)\delta_z)=1$ for every $\lambda\in\T$ and $z\in U$, and a number $0<\delta<1$ such that
$$
\sup\left\{\diam(S(B_{G^\infty_v(U)},J_v(f_{(\lambda,z)}),\delta))\colon \lambda\in\T,\, z\in U\right\}<\varepsilon .
$$
Take $0<\eta<\varepsilon$ so that 
$$
\left(1+\frac{\varepsilon}{4}\right)(1-\eta)>1+\frac{\varepsilon(1-\delta)}{4}
$$
and consider $f\in S_{H^\infty_v(U,F)}$, $\lambda\in\T$ and $z\in U$ such that $v(z)\left\|f(z)\right\|>1-\eta$. Define $g_0\colon U\to F$ by 
$$
g_0(y)=f(y)+\frac{\varepsilon}{4}\lambda f_{(\lambda,z)}(y)v(y)f(z)\qquad (y\in U).
$$
%Note that $g_0\in H^\infty_v(U,F)$ since 
%$$
%v(y)\left\|g_0(y)\right\|\leq v(y)\left\|f(y)\right\|+\frac{\varepsilon}{4}v(y)\left|f_{(\lambda,z)}(y)\right|v(z)\left\|f(z)\right\|\leq 1+\frac{\varepsilon}{4}
%$$
%for all $y\in U$. 
Clearly, $g_0\in H^\infty_v(U,F)$ with $\left\|f-g_0\right\|_v\leq\varepsilon/4$ since 
$$
v(y)\left\|f(y)-g_0(y)\right\|=\frac{\varepsilon}{4}v(y)\left|f_{(\lambda,z)}(y)\right|v(z)\left\|f(z)\right\|\leq\frac{\varepsilon}{4}
$$ 
for all $y\in U$. Given $y\in U$, we claim that $v(y)\delta_y\in \T S(B_{G^\infty_v(U)},J_v(f_{(\lambda,z)}),\delta)$ whenever $v(y)\left\|g_0(y)\right\|\geq v(z)\left\|g_0(z)\right\|$. Indeed, if $v(y)\delta_y\notin \T S(B_{G^\infty_v(U)},J_v(f_{(\lambda,z)}),\delta)$, we obtain
\begin{align*}
v(y)\left\|g_0(y)\right\|&=\left\|v(y)f(y)+\frac{\varepsilon}{4}\lambda v(y)f_{(\lambda,z)}(y)v(z)f(z)\right\|\\
                         &=\left\|v(y)f(y)+\frac{\varepsilon}{4}\lambda J_v(f_{(\lambda,z)})(v(y)\delta_y)v(z)f(z)\right\|\\
                         &\leq 1+\frac{\varepsilon}{4}\left|J_v(f_{(\lambda,z)})(v(y)\delta_y)\right|\\
												 &=1+\frac{\varepsilon}{4}\Re(J_v(f_{(\lambda,z)})(\alpha v(y)\delta_y))\leq 1+\frac{\varepsilon(1-\delta)}{4},
\end{align*}
where we have used that 
$$
\left|J_v(f_{(\lambda,z)})(v(y)\delta_y)\right|=\alpha J_v(f_{(\lambda,z)})(v(y)\delta_y)=J_v(f_{(\lambda,z)})(\alpha v(y)\delta_y)=\Re(J_v(f_{(\lambda,z)})(\alpha v(y)\delta_y))
$$
for some $\alpha\in\T$, and as we also have  
$$
v(z)\left\|g_0(z)\right\|=\left(1+\frac{\varepsilon}{4}\right)v(z)\left\|f(z)\right\|>\left(1+\frac{\varepsilon}{4}\right)(1-\eta),
$$ 
our claim follows. 

Since $\left\|g_0\right\|_v\geq v(z)\left\|g_0(z)\right\|>0$, taking $g=g_0/\left\|g_0\right\|_v$, we have  
$$
\left\|f-g\right\|_v\leq\left\|f-g_0\right\|_v+\left\|g_0-g\right\|_v=\left\|f-g_0\right\|_v+\left|\left\|g_0\right\|_v-1\right|\leq\frac{\varepsilon}{4}+\frac{\varepsilon}{4}=\frac{\varepsilon}{2}.
$$
The proof will be finished if $v(z)\left\|g(z)\right\|=\left\|g\right\|_v$. Otherwise, take 
$$
0<\varepsilon'<\min\left\{\frac{\varepsilon}{2},\left\|g\right\|_v-v(z)\left\|g(z)\right\|\right\}.
$$
Since $H^\infty_{v\NA}(U,F)$ is norm dense in $H^\infty_v(U,F)$ by Lemma \ref{prop-1}, we can take $f_0\in S_{H^\infty_v(U,F)}$ and $z_0\in U$ such that $v(z_0)\left\|f_0(z_0)\right\|=1$ and $\left\|g-f_0\right\|_v<\varepsilon'$. From the inequality
\begin{align*}
v(z_0)\left\|g(z_0)\right\|&\geq v(z_0)\left\|f_0(z_0)\right\|-\left\|f_0-g\right\|_v\geq\left\|f_0\right\|_v-\varepsilon'\\
                           &\geq \left\|f_0\right\|_v-\left(\left\|g\right\|_v-v(z)\left\|g(z)\right\|\right)=v(z)\left\|g(z)\right\|,
\end{align*}
we deduce that $v(z_0)\left\|g_0(z_0)\right\|\geq v(z)\left\|g_0(z)\right\|$ and our claim yields $v(z_0)\delta_{z_0}\in \T S(B_{G^\infty_v(U)},J_v(f_{(\lambda,z)}),\delta)$. Hence $\lambda_0v(z_0)\delta_{z_0}\in S(B_{G^\infty_v(U)},J_v(f_{(\lambda,z)}),\delta)$ for some $\lambda_0\in\T$, and thus we have 
$$
\left\|\lambda v(z)\delta_z-\lambda_0 v(z_0)\delta_{z_0}\right\|\leq\diam(S(B_{G^\infty_v(U)},J_v(f_{(\lambda,z)}),\delta))<\varepsilon
$$
because also $\lambda v(z)\delta_z\in S(B_{G^\infty_v(U)},J_v(f_{(\lambda,z)}),\delta)$. Lastly, note that 
$$
\left\|f-f_0\right\|_v\leq\left\|f-g\right\|_v+\left\|g-f_0\right\|_v <\varepsilon.
$$
\end{proof}

We now present a family of spaces $H^\infty_v$ satisfying the conditions of Theorem \ref{teo-2}. For each $p>0$, let us recall that $v_p\colon\D\to\mathbb{R}^+$ is the \emph{polynomial weight} defined by $v_p(z)=(1-|z|^2)^p$ for all $z\in\D$. 

For each $z\in\D$, let $\phi_z\colon\D\to\D$ be the \textit{M\"obius transformation} given by 
$$
\phi_z(w)=\frac{z-w}{1-\overline{z}w}\qquad (w\in\D).
$$
Consider the \emph{pseudohyperbolic metric} $\rho\colon\D\times\D\to\mathbb{R}$ defined by 
$$
\rho(z,w)=\left|\phi_z(w)\right|\qquad (z,w\in\D).
$$
It is easy to check that  
$$
\phi_z'(w)=\frac{1-|z|^2}{(1-\overline{z}w)^2}
$$
and
$$
(1-|w|^2)\left|\phi_z'(w)\right|=\frac{(1-|w|^2)(1-|z|^2)}{|1-\overline{z}w|^2}=1-\left|\frac{z-w}{1-\overline{z}w}\right|^2=1-\rho(z,w)^2
$$
for all $w\in\D$. 

We will use a reformulation of an inequality stated in \cite{HedKorZhu-00}.

%\begin{lemma}\label{lema0}\cite[Lemma 5.1]{HedKorZhu-00}
%Let $p>0$. Then there exists a constant $M_p>1$ (depending only on $p$) such that $\left\|v_p(z)\delta_z-v_p(w)\delta_w\right\|\leq M_p\,\rho(z,w)$ for all $z,w\in\D$. $\hfill\qed$
%\end{lemma}

\begin{lemma}\label{lema0}\cite[Lemma 5.1]{HedKorZhu-00}
Let $p>0$ and $f\in H^\infty_{v_p}(\D)$. Then there exists a constant $N_p>0$ (depending only on $p$) such that
$$
\left|v_p(z)f(z)-v_p(w)f(w)\right|\leq N_p\left\|f\right\|_{v_p}\rho(z,w)
$$
for all $z,w\in\D$ with $\rho(z,w)\leq 1/2$. $\hfill\qed$
\end{lemma}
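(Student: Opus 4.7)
My plan is to use Möbius invariance of the pseudohyperbolic metric to transport the problem onto a fixed disk where $v_pf$ looks like a bounded holomorphic quantity, and then apply a Cauchy estimate. Fix $z,w\in\D$ with $\rho(z,w)\leq 1/2$, set $u:=\phi_z(w)$ (so $|u|=\rho(z,w)$ and, by the involution property, $\phi_z(u)=w$), and define the holomorphic function $g(\zeta):=f(\phi_z(\zeta))$ on $\D$. Using the identity $1-|\phi_z(\zeta)|^2=(1-|z|^2)(1-|\zeta|^2)/|1-\bar z\zeta|^2$ together with $v_p(\phi_z(\zeta))|g(\zeta)|\leq\|f\|_{v_p}$, one gets
$$\bigl|v_p(z)\,g(\zeta)\bigr|\ \leq\ \frac{\|f\|_{v_p}\,|1-\bar z\zeta|^{2p}}{v_p(\zeta)}\ \leq\ 7^p\,\|f\|_{v_p}\qquad\bigl(|\zeta|\leq \tfrac34\bigr),$$
since on that disk $v_p(\zeta)\geq(7/16)^p$ and $|1-\bar z\zeta|\leq 7/4$.

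Setting $h(\zeta):=v_p(z)g(\zeta)$, which is holomorphic and bounded by $7^p\|f\|_{v_p}$ on $\{|\zeta|\leq 3/4\}$, Cauchy's estimate yields $|h'(\zeta)|\leq 4\cdot 7^p\|f\|_{v_p}$ on $\{|\zeta|\leq 1/2\}$. Integrating along the segment from $0$ to $u$ (which stays in that disk) and using $h(0)=v_p(z)f(z)$, $h(u)=v_p(z)f(w)$, I obtain
$$\bigl|v_p(z)f(z)-v_p(z)f(w)\bigr|\ \leq\ 4\cdot 7^p\,\|f\|_{v_p}\,\rho(z,w).$$

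It remains to swap $v_p(z)$ for $v_p(w)$ in the second term. Writing the difference as $(v_p(z)-v_p(w))f(w)$ and using $|f(w)|\leq\|f\|_{v_p}/v_p(w)$, it is enough to prove the auxiliary weight-comparability estimate
$$|v_p(z)-v_p(w)|\ \leq\ C'_p\,v_p(w)\,\rho(z,w)\qquad (\rho(z,w)\leq 1/2).$$
Setting $\tau:=(1-|z|^2)/(1-|w|^2)$, I would reduce this to: (i) $\tau$ lies in a compact subinterval of $(0,\infty)$ depending only on the bound on $\rho$; (ii) $|\tau-1|\leq C\rho(z,w)$. Both follow from the Möbius identity $(1-|z|^2)(1-|w|^2)=(1-\rho^2)|1-\bar z w|^2$ combined with the elementary bounds $|1-\bar z w|\geq(1-|z|^2)/2$ and $||z|^2-|w|^2|\leq 2\rho\,|1-\bar z w|$; the Lipschitz property of $t\mapsto t^p$ on the relevant compact interval then finishes the job.

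Combining the two bounds via $|v_p(z)f(z)-v_p(w)f(w)|\leq|v_p(z)f(z)-v_p(z)f(w)|+|v_p(z)-v_p(w)||f(w)|$ gives the lemma with $N_p=4\cdot 7^p+C'_p$. I expect the main obstacle to be the auxiliary weight-comparability step: $v_p$ is not globally Lipschitz in $\rho$, so one must use the Möbius identity symmetrically in $z$ and $w$ to obtain the two-sided ratio control. Keeping constants uniform in $p$ also requires mild care, since for $0<p<1$ the map $t\mapsto t^p$ has unbounded derivative near zero (though not on the subinterval where $\tau$ lives).
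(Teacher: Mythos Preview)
The paper does not give its own proof of this lemma: it is quoted verbatim from \cite[Lemma 5.1]{HedKorZhu-00} and closed with a qed. So there is nothing to compare against except the cited source, whose argument also rests on M\"obius invariance and a Cauchy-type estimate; your approach is in the same spirit.

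Your argument is correct. The first half (transplant via $\phi_z$, the bound $|v_p(z)g(\zeta)|\le 7^p\|f\|_{v_p}$ on $|\zeta|\le 3/4$, the Cauchy estimate and the integration over $[0,u]$) is fully justified as written. For the weight-comparability step you only sketch, the pieces do fit together: from $(1-|z|^2)(1-|w|^2)=(1-\rho^2)|1-\bar z w|^2$ and $|1-\bar z w|\ge(1-|w|^2)/2$ one gets $\tau\ge(1-\rho^2)/4\ge 3/16$, and by symmetry $\tau\le 16/3$; the bound $||z|^2-|w|^2|\le 2|z-w|=2\rho\,|1-\bar z w|$ together with $|1-\bar z w|/(1-|w|^2)=\sqrt{\tau/(1-\rho^2)}\le 8/3$ gives $|\tau-1|\le(16/3)\rho$, and the Lipschitz constant of $t\mapsto t^p$ on $[3/16,16/3]$ furnishes the final $C'_p$. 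Since the lemma only asserts existence of $N_p$, the explicit constants you obtain are a bonus rather than a requirement.
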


In view of Lemma \ref{lema0}, we can make the following.

\begin{remark}\label{rem-1}
For any $f\in H^\infty_{v_p}(\D)$, on the one hand, we have 
$$
\left|v_p(z)f(z)-v_p(w)f(w)\right|\leq N_p\left\|f\right\|_{v_p}\rho(z,w)
$$
whenever $z,w\in\D$ with $\rho(z,w)\leq 1/2$ by Lemma \ref{lema0}, and on the other hand, we get
$$
\left|v_p(z)f(z)-v_p(w)f(w)\right|\leq 2\left\|f\right\|_{v_p}\leq 4\left\|f\right\|_{v_p}\rho(z,w)
$$
whenever $z,w\in\D$ with $\rho(z,w)>1/2$. Therefore, taking $M_p=\max\{N_p,4\}$, we infer that   
$$
\left\|v_p(z)\delta_z-v_p(w)\delta_w\right\|\leq M_p\,\rho(z,w)\qquad (z,w\in\D).
$$
\end{remark}

We will also apply the following easy fact.

\begin{lemma}\label{auxremark}
Let $\varepsilon>0$. If $\lambda\in\C$ with $|\lambda|\leq 1$ and $1-\mathrm{Re}(\lambda)<\varepsilon^2/2$, then $|1-\lambda|<\varepsilon$.$\hfill\qed$
\end{lemma}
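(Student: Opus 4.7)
The plan is to expand $|1-\lambda|^2$ directly in terms of $\mathrm{Re}(\lambda)$ and $|\lambda|^2$, and then use the bound $|\lambda|\le 1$ to eliminate the imaginary part. Writing $\lambda = a + bi$ with $a,b\in\mathbb{R}$, I observe that
\[
|1-\lambda|^2 = (1-a)^2 + b^2 = 1 - 2a + (a^2+b^2) = 1 - 2\,\mathrm{Re}(\lambda) + |\lambda|^2.
\]
Since $|\lambda|^2 \le 1$, this gives $|1-\lambda|^2 \le 2\bigl(1-\mathrm{Re}(\lambda)\bigr)$.

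Now I simply plug in the hypothesis $1-\mathrm{Re}(\lambda) < \varepsilon^2/2$ to conclude
\[
|1-\lambda|^2 < 2\cdot\frac{\varepsilon^2}{2} = \varepsilon^2,
\]
and taking square roots yields $|1-\lambda|<\varepsilon$, as required. There is no real obstacle here; the only mildly nontrivial point is recognising the identity $|1-\lambda|^2 = 1 - 2\,\mathrm{Re}(\lambda) + |\lambda|^2$, which is the complex-analogue of the law of cosines. The argument is transparent enough that the authors label the fact \emph{easy} and omit its proof in the excerpt.
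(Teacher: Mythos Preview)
Your proof is correct and is precisely the natural argument one expects; the paper itself omits the proof entirely (the statement carries a terminal $\qed$), so there is nothing further to compare.
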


We now have all the necessary tools to prove the following result.

\begin{theorem}\label{prop-vp}
$H^\infty_{v_p}(\D,F)$ with $p\geq 1$ has the $WH^\infty$-BPB property for every complex Banach space $F$.
\end{theorem}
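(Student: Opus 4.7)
The plan is to apply Theorem \ref{teo-2}, which reduces matters to verifying that $\T\At_{G^\infty_{v_p}(\D)}$ is a norm-closed set of uniformly strongly exposed points of $B_{G^\infty_{v_p}(\D)}$. As exposing functionals I would take the reproducing-kernel-type functions
$$f_{(\lambda, z)}(w) := \frac{\bar{\lambda}(1-|z|^2)^p}{(1-\bar{z}w)^{2p}}, \qquad (\lambda,z)\in\T\times\D,\ w\in\D.$$
A short calculation based on the identity $(1-|z|^2)(1-|w|^2) = |1-\bar{z}w|^2(1-\rho(z,w)^2)$ gives
$$\mu v_p(w)f_{(\lambda,z)}(w) = \mu\bar{\lambda}\, e^{-2ip\psi(z,w)}(1-\rho(z,w)^2)^p \qquad(\mu\in\T,\ w\in\D),$$
where $\psi(z,w):=\arg(1-\bar{z}w)\in(-\pi/2,\pi/2)$ (since $\mathrm{Re}(1-\bar{z}w)\geq 1-|z||w|>0$). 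In particular $\|f_{(\lambda,z)}\|_{v_p}=1$ and $J_{v_p}(f_{(\lambda,z)})(\lambda v_p(z)\delta_z)=1$.

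The heart of the argument is the atomic slice estimate. Suppose the atom $\mu v_p(w)\delta_w$ lies in $S(B_{G^\infty_{v_p}(\D)}, J_{v_p}(f_{(\lambda,z)}), \delta)$. Taking the modulus in the slice condition forces $(1-\rho(z,w)^2)^p>1-\delta$, and the inequality $(1-\delta)^{1/p}\geq 1-\delta$ (valid precisely because $p\geq 1$) yields $\rho(z,w)^2\leq\delta$. The identity $\mathrm{Im}(\bar{z}w)=\mathrm{Im}(\bar{z}(w-z))$ combined with $|w-z|=\rho(z,w)|1-\bar{z}w|$ gives $|\sin\psi(z,w)|\leq\rho(z,w)$, so $|\psi(z,w)|\leq(\pi/2)\rho(z,w)$ and $|1-e^{-2ip\psi(z,w)}|\leq 2p|\psi(z,w)|\leq\pi p\,\rho(z,w)$; Bernoulli's inequality moreover gives $1-(1-\rho^2)^p\leq p\rho^2$. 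Combining these with $|1-\mu\bar{\lambda}\,e^{-2ip\psi}(1-\rho^2)^p|\leq\sqrt{2\delta}$ (supplied by Lemma \ref{auxremark}) via two triangle inequalities forces $|1-\mu\bar{\lambda}|$ to be $O_p(\sqrt\delta)$, and Remark \ref{rem-1} then delivers
$$\|\lambda v_p(z)\delta_z-\mu v_p(w)\delta_w\|\leq |\lambda-\mu|+M_p\,\rho(z,w)\leq C_p\sqrt\delta,$$
for a constant $C_p$ depending only on $p$, uniformly in $(\lambda,z)$.

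To pass from this atomic estimate to a diameter bound on slices of the whole unit ball, I would invoke the standard Lindenstrauss-type convex-combination argument: since $B_{G^\infty_{v_p}(\D)}=\overline{\co}(\T\At_{G^\infty_{v_p}(\D)})$ by Theorem \ref{main-theo}, any $\phi$ in the slice is a norm limit of convex combinations of atoms, and a mass-splitting argument shows that at most an $O(\sqrt\delta)$ fraction of mass is carried by atoms outside the atomic slice of parameter $\sqrt{2\delta}$; hence $\|\phi-\lambda v_p(z)\delta_z\|$ can be made arbitrarily small uniformly in $(\lambda,z)$ as $\delta\to 0$. Finally, for the norm-closedness of $\T\At_{G^\infty_{v_p}(\D)}$: if $\lambda_n v_p(z_n)\delta_{z_n}\to\phi$ in norm, compactness of $\T$ permits the extraction of $\lambda_n\to\lambda\in\T$; testing against $g_m(u):=(1-|z_m|^2)^p/(1-\bar{z}_m u)^{2p}\in B_{H^\infty_{v_p}(\D)}$ then forces $|\lambda_n v_p(z_n)g_m(z_n)-\lambda_m|\to 0$, whence $(1-\rho(z_m,z_n)^2)^p\to 1$ and $\{z_n\}$ is $\rho$-Cauchy; completeness of the pseudohyperbolic metric on $\D$ yields $z_n\to z_0\in\D$ and $\phi=\lambda v_p(z_0)\delta_{z_0}$. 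Theorem \ref{teo-2} then gives the $WH^\infty$-BPB property for $H^\infty_{v_p}(\D,F)$. The principal technical obstacle lies in the atomic estimate: the slice condition couples the modulus factor $(1-\rho^2)^p$ with the phase factors $e^{-2ip\psi}$ and $\mu\bar{\lambda}$, and disentangling smallness of $\rho(z,w)$ from smallness of the phases crucially uses the geometric inequality $|\sin\psi|\leq\rho$ together with the hypothesis $p\geq 1$.
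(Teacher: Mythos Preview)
Your proposal is correct and follows the same route as the paper: both apply Theorem \ref{teo-2} with the reproducing-kernel functionals $\bar\lambda(\phi_z')^p$, obtain the atomic slice estimate via Lemma \ref{auxremark} and Remark \ref{rem-1} (with the hypothesis $p\ge 1$ entering exactly where you use it), and then pass from atoms to general $\phi\in B_{G^\infty_{v_p}(\D)}$ by the mass-splitting argument on convex combinations. The only differences are cosmetic: the paper bounds $|\lambda-\alpha|$ indirectly by testing the already-obtained norm estimate $\|v_p(z)\delta_z-v_p(w)\delta_w\|$ against $f_z$ rather than through your explicit inequality $|\sin\psi(z,w)|\le\rho(z,w)$, and it handles closedness by extracting a subsequence in $\overline{\D}$ and ruling out boundary limits, whereas your $\rho$-Cauchy argument is an equally valid (and arguably cleaner) alternative.
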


\begin{proof}
Fix $p\geq 1$. Let $\lambda\in\T$ and $z\in\D$. Define the function $f_z\colon\D\to\C$ by 
$$
f_z(w)=(\phi_z'(w))^p\qquad (w\in\D).
$$ 
Clearly, $\overline{\lambda}f_z\in H(\D)$ and since
$$
(1-|w|^2)^p|(\overline{\lambda}f_z)(w)|=(1-|w|^2)^p|\phi_z'(w)|^p=\left(1-\rho(z,w)^2\right)^p\leq 1
$$
for all $w\in\D$, it follows that $\overline{\lambda}f_z\in H^\infty_{v_p}(\D)$ with $\left\|\overline{\lambda}f_z\right\|_{v_p}\leq 1$. Hence $J_{v_p}(\overline{\lambda}f_z)\in G^\infty_{v_p}(\D)^*$ with $\left\|J_{v_p}(\overline{\lambda}f_z)\right\|\leq 1$ by Theorem \ref{main-theo}. In fact,  
$$
J_{v_p}(\overline{\lambda}f_z)(\lambda v_p(z)\delta_z)=\lambda v_p(z)\delta_z(\overline{\lambda}f_z)=\lambda v_p(z)\overline{\lambda}f_z(z)=v_p(z)f_z(z)=1
$$
where $\lambda v_p(z)\delta_z\in G^\infty_{v_p}(\D)$ with $\left\|\lambda v_p(z)\delta_z\right\|=1$, and thus $\left\|J_{v_p}(\overline{\lambda}f_z)\right\|=1$.

%We will prove that the set of functionals 
%$$
%\left\{J_{v_p}(\overline{\lambda}f_z)\colon \lambda\in\T,\, z\in\D\right\}\subseteq S_{G^\infty_{v_p}(\D)^*}
%$$ 
%uniformly strongly exposes the points of $B_{G^\infty_{v_p}(\D)}$, that is, 
Now, we will prove that for every $0<\varepsilon<1$ there exists $0<\delta<1$ such that 
$$
\mathrm{Re}(J_{v_p}(\overline{\lambda}f_z)(\phi))>1-\delta,\; \phi\in B_{G^\infty_{v_p}(\D)}\quad\Rightarrow\quad \left\|\phi-\lambda v_p(z)\delta_z\right\|<\varepsilon.
$$
Let $0<\varepsilon<1$. Remark \ref{rem-1} provides a constant $M_p>1$ so that    
$$
\left\|v_p(z)\delta_z-v_p(w)\delta_w\right\|\leq M_p\,\rho(z,w)\qquad (z,w\in\D).
$$
Take $\delta_1=\varepsilon/6M_p$ and we have 
$$
\left\|v_p(z)\delta_z-v_p(w)\delta_w\right\|<\frac{\varepsilon}{6}\qquad (z,w\in\D,\; \rho(z,w)<\delta_1).
$$
Let $\delta=(\varepsilon/36)\left(\min\left\{\delta_1^2,\varepsilon/6\right\}\right)^2$. We claim that 
$$
\mathrm{Re}(J_{v_p}(\overline{\lambda}f_z)(\alpha v_p(w)\delta_w))>1-\frac{18\delta}{\varepsilon},\; \alpha\in\T ,\; w\in\D\quad\Rightarrow\quad \left\|\alpha v_p(w)\delta_w-\lambda v_p(z)\delta_z\right\|<\frac{\varepsilon}{2}.
$$
Indeed, let $w\in\D$ and $\alpha\in\T$ with $\mathrm{Re}(J_{v_p}(\overline{\lambda}f_z)(\alpha v_p(w)\delta_w))>1-18\delta/\varepsilon$. It is clear that 
$$
1-\mathrm{Re}\left(\alpha v_p(w)\delta_w\left(\overline{\lambda}f_z\right)\right)<\frac{18\delta}{\varepsilon}=\frac{1}{2}\left(\min\left\{\delta_1^2,\frac{\varepsilon}{6} \right\}\right)^2
$$
and 
$$
\left|\alpha v_p(w)\delta_w\left(\overline{\lambda}f_z\right)\right|\leq \left\|v_p(w)\delta_w\right\|\left\|f_z\right\|_{v_p}=1,
$$
and therefore Lemma \ref{auxremark} yields 
$$
\left|1-\alpha v_p(w)\delta_w\left(\overline{\lambda}f_z\right)\right|<\min\left\{\delta_1,\frac{\varepsilon}{6}\right\}.
$$
Thus, by the properties of $f_z$ and the fact that $p\geq 1$, it follows that 
\begin{align*}
\rho(z,w)^2&\leq 1-\left(1-|w|^2\right)^p|f_z(w)|\leq \left|1-\alpha\overline{\lambda}\left(1-|w|^2\right)^pf_z(w) \right|\\
&=\left|1-\alpha v_p(w)\delta_w\left(\overline{\lambda}f_z\right)\right|<\min\left\{\delta_1^2,\frac{\varepsilon}{6}\right\},
\end{align*}
so $\rho(z,w)<\delta_1$. Therefore, $\left\|v_p(w)\delta_w-v_p(z)\delta_z\right\|<\varepsilon/6$. Furthermore, we have 
\begin{align*}
|\lambda-\alpha|&\leq\left|\lambda-\alpha v_p(w)\delta_w(f_z)\right|+\left|\alpha v_p(w)\delta_w(f_z)-\alpha\right|\\
&=\left|1-\alpha v_p(w)\delta_w\left(\overline{\lambda}f_z\right)\right|+\left|v_p(w)\delta_w(f_z)-v_p(z)\delta_z(f_z)\right|\\
&<\frac{\varepsilon}{6}+\left\|v_p(w)\delta_w-v_p(z)\delta_z\right\|<\frac{\varepsilon}{3}.
\end{align*}
Hence
\begin{align*}
\left\|\alpha v_p(w)\delta_w-\lambda v_p(z)\delta_z\right\|&\leq\left\|\alpha v_p(w)\delta_w-\alpha v_p(z)\delta_z\right\|+\left\|\alpha v_p(z)\delta_z-\lambda v_p(z)\delta_z\right\|\\
&<\frac{\varepsilon}{6}+|\alpha-\lambda|\left\|v_p(z)\delta_z\right\|<\frac{\varepsilon}{2},
\end{align*}
and this proves our claim.

Now, let $\phi\in B_{G^\infty_{v_p}(\D)}$ such that $\mathrm{Re}(J_{v_p}(\overline{\lambda}f_z)(\phi))>1-\delta$. We will show that $\left\|\phi-\lambda v_p(z)\delta_z\right\|<\varepsilon$. Since $B_{G^\infty_{v_p}(\D)}=\overline{\co}\left(\T\At_{G^\infty_{v_p}(\D)}\right)$ by Theorem \ref{main-theo}, then there exists $\gamma\in\co\left(\T\At_{G^\infty_{v_p}(\D)}\right)$ such that $\|\phi-\gamma\|< \min\left\{\varepsilon/6,\delta\right\}$. Thus
$$
1-\mathrm{Re}\left(\gamma\left(\overline{\lambda}f_z\right)\right)=1-\mathrm{Re}\left(\phi\left(\overline{\lambda}f_z\right)\right)+\mathrm{Re}\left(\phi\left(\overline{\lambda}f_z\right)-\gamma\left(\overline{\lambda}f_z\right)\right)<2\delta.
$$
Let $z_1,\ldots,z_m\in\D$, $\lambda_1,\ldots,\lambda_m\in\T$ and $t_1,\ldots,t_m\in [0,1]$ be with $\sum_{j=1}^mt_j=1$ such that $\gamma=\sum_{j=1}^m t_j\lambda_jv_p(z_j)\delta_{z_j}$. Fix
$$
I=\left\{j\in\{1,\ldots, m\}\colon v_p(z_j)\mathrm{Re}\left(\lambda_j\delta_{z_j}\left(\overline{\lambda}f_z\right)\right)< 1-\frac{15\delta}{\varepsilon} \right\}.
$$
On the one hand, we have
\begin{align*}
\mathrm{Re}\left(\gamma\left(\overline{\lambda}f_z\right)\right)&=\sum_{j=1}^m t_jv_p(z_j)\mathrm{Re}\left(\lambda_j\delta_{z_j}\left(\overline{\lambda}f_z\right)\right)\\
&\leq\sum_{j\in\{1,\ldots,m\}\backslash I}t_j+\left(\sum_{j\in I}t_j\right)\left(1-\frac{15\delta}{\varepsilon}\right)
=1-\frac{15\delta}{\varepsilon}\sum_{j\in I}t_j,
\end{align*}
and it follows that 
$$
\sum_{j\in I}t_j\leq \frac{\varepsilon}{15\delta} 
\left(1-\mathrm{Re}\left(\gamma\left(\overline{\lambda}f_z\right)\right)\right)<
\frac{\varepsilon}{15\delta} 2\delta<\frac{\varepsilon}{6}.
$$
On the other hand, given $j\in\{1,\ldots,m\}\backslash I$, we have 
$$
1-\frac{18\delta}{\varepsilon}<1-\frac{15\delta}{\varepsilon}
\leq v_p(z_j)\mathrm{Re}\left(\lambda_j\delta_{z_j}\left(\overline{\lambda}f_z\right)\right)
=\mathrm{Re}(J_{v_p}(\overline{\lambda}f_z)(\lambda_j v_p(z_j)\delta_{z_j})),
$$
and our previous claim yields $\left\|\lambda_jv_p(z_j)\delta_{z_j}-\lambda v_p(z)\delta_z\right\|<\varepsilon/2$.
Then
\begin{align*}
\left\|\phi-\lambda v_p(z)\delta_z\right\| &\leq \|\phi-\gamma\|+\left\|\gamma-\lambda v_p(z)\delta_z\right\|\\
&<\frac{\varepsilon}{6}+\left\|\sum_{j=1}^mt_j\lambda_j v_p(z_j)\delta_{z_j}-\lambda v_p(z)\delta_z\right\|\\
&\leq \frac{\varepsilon}{6}+\sum_{j\in I}t_j\left\|\lambda_j v_p(z_j)\delta_{z_j}-\lambda v_p(z)\delta_z\right\|+\sum_{j\in \{1,\ldots,m\}\backslash I}t_j\left\|\lambda_jv_p(z_j)\delta_{z_j}
-\lambda v_p(z)\delta_z\right\|\\
&\leq \frac{\varepsilon}{6}+2\sum_{j\in I}t_j+\frac{\varepsilon}{2}\sum_{j\in \{1,\ldots,m\}\backslash I}t_j\\
&<\frac{\varepsilon}{6}+2\frac{\varepsilon}{6}+\frac{\varepsilon}{2}=\varepsilon,
\end{align*}
as required.

Finally, we will prove that $\T\At_{G^\infty_{v_p}(\D)}$ is norm-closed in $ G^\infty_{v_p}(\D)$. For it, let $(\lambda_n)$ and $(z_n)$ be sequences in $\T$ and $\D$, respectively, such that $(\lambda_n v_p(z_n)\delta_{z_n})$ converges in norm to some $\phi\in G^\infty_{v_p}(\D)$. This implies that $\left\|\phi\right\|=1$. Since $\T$ and $\overline{\D}$ are compact, we can take subsequences $(\lambda_{n_k})_k$ and $(z_{n_k})_k$ which converge to some $\lambda_0\in\T$ and $z_0\in\overline{\D}$, respectively. If $z_0\in\T$, we have    
$$
\phi=\lim_{k\to\infty}\lambda_{n_k}v_p(z_{n_k})\delta_{z_{n_k}}=0,
$$
which is impossible. Hence $z_0\in\D$, and we conclude that     
$$
\phi=\lim_{k\to\infty}\lambda_{n_k}v_p(z_{n_k})\delta_{z_{n_k}}=\lambda_0 v_p(z_0)\delta_{z_0}.
$$
\end{proof}

%%%%%%%%%%%%%%%%%%%%%%%%%%%%%%%%%%%%%%%%%%%%%%%%%%%%%%%%%%%%%%%%%%%%%%%%%%%%%%%%%%%%%%%%%%%%%%%%%%%%%%%%%%%%%%%%%%%%%%%%%%%%%%%%%%%%%%%%%%%%%%%%%%%%%%%%%%%%%%%%%%%%%%%%%%%%%%%%%%%%%%%%%%%%%%%%%%%%%%%%%%%%

\section{Relationship between the complex and vector-valued cases of the $WH^\infty$-BPB property}\label{s2}

Our goal in this section is to study when the $WH^\infty$-BPB property for vector-valued weighted holomorphic mappings is inherited from the $WH^\infty$-BPB property for complex-valued weighted holomorphic functions, and vice versa.

\begin{proposition}\label{prop-3}
Let $U$ be an open subset of a complex Banach space $E$ and let $v$ be a weight on $U$. Suppose that there exists a non-zero complex Banach space $F$ such that $H^\infty_v(U,F)$ has the $WH^\infty$-BPB the property. Then  $H^\infty_v(U)$ has the $WH^\infty$-BPB property.  
\end{proposition}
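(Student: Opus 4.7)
The plan is to use a ``rank-one transplant'' followed by Hahn--Banach contraction. Since $F\neq\{0\}$, fix $y_0\in S_F$ and, by Hahn--Banach, $y_0^*\in S_{F^*}$ with $y_0^*(y_0)=1$. Given $\varepsilon\in(0,1)$, I will choose an auxiliary parameter $\varepsilon'\in(0,\varepsilon)$ small enough that the comparison estimate below closes (for instance $\varepsilon'=\varepsilon/5$) and define the scalar Bollob\'as gauge by $\eta(\varepsilon):=\tilde\eta(\varepsilon')$, where $\tilde\eta$ is the gauge witnessing the $WH^\infty$-BPB property of $H^\infty_v(U,F)$.

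Starting from the scalar data $f\in S_{H^\infty_v(U)}$, $\lambda\in\T$, $z\in U$ with $v(z)|f(z)|>1-\eta(\varepsilon)$, the plan is to lift to the vector-valued function $F(w):=f(w)y_0$, which belongs to $S_{H^\infty_v(U,F)}$ and satisfies the vector Bollob\'as hypothesis with the same $z$ and $\lambda$. Applying the $WH^\infty$-BPB property of $H^\infty_v(U,F)$ produces $F_0\in S_{H^\infty_v(U,F)}$, $\lambda_0\in\T$ and $z_0\in U$ with $v(z_0)\|F_0(z_0)\|=1$, $\|F-F_0\|_v<\varepsilon'$ and $\|\lambda v(z)\delta_z-\lambda_0 v(z_0)\delta_{z_0}\|<\varepsilon'<\varepsilon$, so the atom condition required in the scalar conclusion will be inherited with the same $\lambda_0$. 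To contract back I pick a Hahn--Banach functional $y_1^*\in S_{F^*}$ with $y_1^*(v(z_0)F_0(z_0))=1$ and set
\[
f_0(w):=\mu\,y_1^*(F_0(w))\qquad (w\in U),
\]
where $\mu:=a/|a|\in\T$ is the phase of $a:=v(z_0)f(z_0)$. This $\mu$ is well defined because a one-line comparison of $v(z_0)\|F(z_0)\|=|a|$ with $v(z_0)\|F_0(z_0)\|=1$ through $\|F-F_0\|_v$ forces $|a|>1-\varepsilon'$. One then checks at once that $f_0\in S_{H^\infty_v(U)}$ and $v(z_0)|f_0(z_0)|=1$.

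The main obstacle, and the place where the phase factor $\mu$ is essential, is the estimate $\|f-f_0\|_v<\varepsilon$. Writing
\[
f-f_0=f\bigl(1-\mu\,y_1^*(y_0)\bigr)+\mu\,y_1^*(F-F_0),
\]
the task reduces to bounding $|1-\mu y_1^*(y_0)|$. The naive choice $f_0:=y_1^*\circ F_0$ fails because Hahn--Banach only forces $y_1^*(y_0)$ to be close to $\bar\mu/|a|\approx\bar\mu$, which can be as far as $2$ from $1$ if $\mu\neq 1$; multiplying by $\mu$ cancels this rotation. Concretely, from $y_1^*(v(z_0)F_0(z_0))=1$ together with $\|v(z_0)F_0(z_0)-ay_0\|\leq\|F-F_0\|_v<\varepsilon'$ one obtains $|a\,y_1^*(y_0)-1|<\varepsilon'$; dividing by $|a|$ and combining with $|1/|a|-1|\leq\varepsilon'/(1-\varepsilon')$ yields $|1-\mu y_1^*(y_0)|<2\varepsilon'/(1-\varepsilon')$. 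Substituting back, $\|f-f_0\|_v\leq|1-\mu y_1^*(y_0)|+\|F-F_0\|_v<2\varepsilon'/(1-\varepsilon')+\varepsilon'<\varepsilon$ by the choice of $\varepsilon'$, so the triple $(f_0,\lambda_0,z_0)$ meets all three conditions of the $WH^\infty$-BPB property for $H^\infty_v(U)$.
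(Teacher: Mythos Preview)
Your proof is correct and follows essentially the same strategy as the paper: lift the scalar function to $F$ via a fixed unit vector, apply the vector-valued $WH^\infty$-BPB property, and then project back using a Hahn--Banach functional $y_1^*$ that peaks at $v(z_0)F_0(z_0)$, together with a unimodular phase correction. The only genuine difference is \emph{which} phase you use: the paper takes $\alpha_0\in\T$ so that $\alpha_0 y_1^*(y_0)=|y_1^*(y_0)|$ is real positive and then bounds $1-|y_1^*(y_0)|$ directly by $\|y_1^*\circ(F-F_0)\|_v$, whereas you take $\mu=a/|a|$ with $a=v(z_0)f(z_0)$ and control $|1-\mu\,y_1^*(y_0)|$ through the approximate identity $a\,y_1^*(y_0)\approx 1$. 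Both choices cancel the rotation that would make the naive candidate $y_1^*\circ F_0$ fail, and the resulting estimates are of the same order. (One cosmetic point: you use the symbol $F$ for both the target space and the lifted map $f(\cdot)y_0$; renaming the latter would avoid the clash.)
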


\begin{proof}
Let $0<\varepsilon<1$ and assume that $H^\infty_v(U,F)$ has the $WH^\infty$-BPB property with function $\varepsilon\mapsto\eta(\varepsilon)$. Take $h\in S_{H^\infty_v(U)}$, $\lambda\in\T$ and $z\in U$ such that $v(z)\left|h(z)\right|>1-\eta(\varepsilon/2)$. Pick $y_0\in S_F$ and define $f\in H^\infty_v(U,F)$ by $f(x)=h(x)y_0$ for all $x\in U$. Clearly, $f\in S_{H^\infty_v(U,F)}$ with $v(z)\left\|f(z)\right\|>1-\eta(\varepsilon/2)$. By hypothesis, we can find $f_0\in S_{H^\infty_v(U,F)}$, $\lambda_0\in\T$ and $z_0\in U$ such that $v(z_0)\left\|f_0(z_0)\right\|=1$, $\left\|f-f_0\right\|_v<\varepsilon/2$ and $\left\|\lambda v(z)\delta_z-\lambda_0v(z_0)\delta_{z_0}\right\|<\varepsilon/2$. Now take $y^*\in S_{F^*}$ such that $y^*(v(z_0)f_0(z_0))=1$. We have 
$$
\left\|y^*(y_0)h-y^*\circ f_0\right\|_v=\left\|y^*\circ f-y^*\circ f_0\right\|_v\leq \left\|y^*\right\|\left\|f-f_0\right\|_v<\frac{\varepsilon}{2},
$$
and this implies 
$$
1-\left|y^*(y_0)\right|=\left|\left|y^*(y_0)\right|-1\right|=\left|\left|y^*(y_0)\right|\left\|h\right\|_v-\left\|y^*\circ f_0\right\|_v\right|\leq\left\|y^*(y_0)h-y^*\circ f_0\right\|_v<\frac{\varepsilon}{2}.
$$
We can write $\left|y^*(y_0)\right|=\alpha_0 y^*(y_0)$ for some $\alpha_0\in\T$. Take $z^*=\alpha_0 y^*$. Clearly, $z^*\in S_{F^*}$ with $0\geq 1-z^*(y_0)<\varepsilon/2$. Furthermore,  
$$
\left\|z^*(y_0)h-z^*\circ f_0\right\|_v=\left\|y^*(y_0)h-y^*\circ f_0\right\|_v<\frac{\varepsilon}{2},
$$
and  
$$
\left\|h-z^*(y_0)h\right\|_v=\left|1-z^*(y_0)\right|\left\|h\right\|_v=1-z^*(y_0)<\frac{\varepsilon}{2}.
$$
Therefore, writing $h_0=z^*\circ f_0$, we conclude that $h_0\in S_{H^\infty_v(U)}$ with $v(z_0)\left|h_0(z_0)\right|=1$ and 
$$
\left\|h-h_0\right\|_v\leq\left\|h-z^*(y_0)h\right\|_v+\left\|z^*(y_0)h-h_0\right\|_v<\frac{\varepsilon}{2}+\frac{\varepsilon}{2}=\varepsilon .
$$
\end{proof}

We can establish a similar result for the density of weighted holomorphic mappings attaining their norms.

\begin{proposition}\label{prop-33}
Let $U$ be an open subset of a complex Banach space $E$ and let $v$ be a weight on $U$. Suppose that there exists a non-zero complex Banach space $F$ such that $H^\infty_{v\NA}(U,F)$ is norm dense in $H^\infty_v(U,F)$. Then $H^\infty_{v\NA}(U)$ is norm dense in $H^\infty_v(U)$.
\end{proposition}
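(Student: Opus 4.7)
The plan is to follow the template of the proof of Proposition~\ref{prop-3} almost verbatim, but without the $\eta$-bookkeeping that underpins the BPB part of the statement. Given $\varepsilon>0$ and $h\in S_{H^\infty_v(U)}$ (which is the only case one needs after rescaling), I would fix a unit vector $y_0\in S_F$, set $f(x)=h(x)y_0$ so that $f\in S_{H^\infty_v(U,F)}$, and invoke the hypothesis to choose $f_0\in H^\infty_{v\NA}(U,F)$ with $\|f-f_0\|_v<\varepsilon/4$. Let $z_0\in U$ be a point at which $f_0$ attains its weighted supremum norm, so $v(z_0)\|f_0(z_0)\|=\|f_0\|_v$.

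Next, I would apply Hahn--Banach to pick $y^*\in S_{F^*}$ with $y^*(f_0(z_0))=\|f_0(z_0)\|$ and replace it by $z^*=\alpha_0 y^*$ for a suitable $\alpha_0\in\T$ that makes $z^*(y_0)=|y^*(y_0)|\geq 0$, exactly mimicking the trick of Proposition~\ref{prop-3}. The candidate approximant is $h_0=z^*\circ f_0\in H^\infty_v(U)$; since
$$
\|f_0\|_v=v(z_0)\|f_0(z_0)\|=v(z_0)|h_0(z_0)|\leq \|h_0\|_v\leq \|z^*\|\,\|f_0\|_v=\|f_0\|_v,
$$
the function $h_0$ attains its weighted supremum norm at $z_0$, so $h_0\in H^\infty_{v\NA}(U)$. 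This same chain also yields the handy equality $\|z^*\circ f_0\|_v=\|f_0\|_v$.

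The final step is to bound $\|h-h_0\|_v$. Splitting by the triangle inequality through $z^*(y_0)h$, the summand $\|z^*(y_0)h-h_0\|_v$ equals $\|z^*\circ f-z^*\circ f_0\|_v\leq\|f-f_0\|_v<\varepsilon/4$. For the other summand, the identity $z^*(y_0)h=z^*\circ f$, together with $z^*(y_0)\in[0,1]$ and $\|h\|_v=1$, gives
$$
\|h-z^*(y_0)h\|_v=1-\|z^*\circ f\|_v\leq (1-\|f_0\|_v)+\|z^*\circ f_0-z^*\circ f\|_v\leq 2\|f-f_0\|_v<\varepsilon/2,
$$
where one uses $\|z^*\circ f_0\|_v=\|f_0\|_v$ and $|1-\|f_0\|_v|=|\,\|f\|_v-\|f_0\|_v\,|\leq\|f-f_0\|_v$. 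Combining yields $\|h-h_0\|_v<\varepsilon$.

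The only point at which this argument genuinely departs from Proposition~\ref{prop-3} is that $f_0$ is no longer constrained to lie on the unit sphere, so the small slack $|1-\|f_0\|_v|$ shows up in the estimate of $1-|y^*(y_0)|$. Starting from $\|f-f_0\|_v<\varepsilon/4$ rather than $\varepsilon/2$ absorbs this, and I do not foresee any other obstacle.
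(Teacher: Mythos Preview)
Your proposal is correct and follows essentially the same route as the paper: embed $h$ into $H^\infty_v(U,F)$ via $f=h(\cdot)y_0$, approximate by a norm-attaining $f_0$, and pull back with a Hahn--Banach functional rotated so that $z^*(y_0)\geq 0$. The only cosmetic difference is that the paper normalises $f_0$ to lie in $S_{H^\infty_v(U,F)}$ (so the estimates from Proposition~\ref{prop-3} apply verbatim with $\varepsilon/2$), whereas you keep $f_0$ unnormalised and absorb the extra $|1-\|f_0\|_v|$ term by starting from $\varepsilon/4$; both variants yield the same conclusion.
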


\begin{proof}
Let $\varepsilon>0$ and $h\in S_{H^\infty_v(U)}$. Pick $y_0\in S_F$ and define $f\in H^\infty_v(U,F)$ by $f(x)=h(x)y_0$ for all $x\in U$. By hypothesis, we can find $f_0\in S_{H^\infty_v(U,F)}$ and $z_0\in U$ such that $v(z_0)\left\|f_0(z_0)\right\|=1$ and $\left\|f-f_0\right\|_v<\varepsilon/2$. Now take $y^*\in S_{F^*}$ such that $y^*(v(z_0)f_0(z_0))=1$. Let $\alpha_0\in\T$ be so that $\left|y^*(y_0)\right|=\alpha_0 y^*(y_0)$ and take $z^*=\alpha_0 y^*\in S_{F^*}$. As in the proof of Proposition \ref{prop-33}, we have $\left\|z^*(y_0)h-z^*\circ f_0\right\|_v<\varepsilon/2$ and $\left\|z^*(y_0)h-h\right\|_v<\varepsilon/2$. Therefore, $h_0=z^*\circ f_0\in S_{H^\infty_v(U)}$ with $v(z_0)\left|h_0(z_0)\right|=1$, hence $h_0\in H^\infty_{v\NA}(U)$) and satisfies that
$$
\left\|h-h_0\right\|_v\leq\left\|h-z^*(y_0)h\right\|_v+\left\|z^*(y_0)h-h_0\right\|_v<\frac{\varepsilon}{2}+\frac{\varepsilon}{2}=\varepsilon .
$$
\end{proof}

Our next aim is to study the converse problem of passing from the $WH^\infty$-BPB property for complex-valued weighted holomorphic functions to vector-valued mappings. We will need the following concept 
introduced by Lindenstrauss \cite{Lin-63} and renamed as the property $\beta$ by Schachermayer \cite{Sch-83}.

\begin{definition}\cite{Lin-63}\label{def-property-beta}
A Banach space $F$ has \emph{the property $\beta$} if there is a set $\left\{(y_i,y^*_i)\colon i\in I\right\}\subseteq F\times F^*$, and a constant $0\leq\rho<1$ satisfying the following properties:
\begin{enumerate}
	\item $\left\|y^*_i\right\|=\left\|y_i\right\|=y^*_i(y_i)=1$ for every $i\in I$.
	\item $\left|y^*_i(y_j)\right|\leq\rho$ for every $i,j\in I$ with $i\neq j$.
  \item $\left\|y\right\|=\sup\left\{\left|y^*_i(y)\right|\colon i\in I\right\}$ for every $y\in F$.
\end{enumerate}
\end{definition}

Examples of Banach spaces with the property $\beta$ are the finite-dimensional spaces whose unit ball is a polyhedron, the sequence spaces $c_0$ and $\ell_1$ endowed with their usual norms, and those spaces of continuous functions $C(K)$ where $K$ is a compact Hausdorff topological space having a dense set of isolated points. Besides, Partington \cite{Par-82} proved that every Banach space admits an equivalent norm with this property.

Our arguments will require a version for weighted holomorphic mappings of the concept of the adjoint operator $T^*\in L(F^*,E^*)$ of an operator $T\in L(E,F)$. Let $f\in H^\infty_v(U,F)$. Given $y^*\in F^*$, it is clear that $y^*\circ f\in H(U)$ and   
$$
v(z)\left\|(y^*\circ f)(z)\right\|=v(z)\left\|y^*(f(z))\right\|\leq v(z)\left\|y^*\right\|\left\|f(z)\right\|\leq \left\|f\right\|_v\left\|y^*\right\|
$$
for all $z\in U$. Hence $y^*\circ f\in H^\infty_v(U)$ with $\left\|y^*\circ f\right\|_v\leq \left\|f\right\|_v\left\|y^*\right\|$. This justifies the following.

\begin{definition}
Let $U$ be an open subset of a complex Banach space $E$, $v$ be a weight on $U$ and $F$ be a complex Banach space. Given $f\in H^\infty_v(U,F)$, the \textit{weighted holomorphic transpose} of $f$ is the mapping $f^t\colon F^*\to H^\infty_v(U)$ given by $f^t(y^*)=y^*\circ f$ for all $y^*\in F^*$.
\end{definition}

Clearly, $f^t$ is linear and continuous with $||f^t||\leq \left\|f\right\|_v$. Furthermore, $||f^t||=\left\|f\right\|_v$. Indeed, for $0<\varepsilon<\left\|f\right\|_v$, take $z\in U$ such that $v(z)\left\|f(z)\right\|>\left\|f\right\|_v-\varepsilon$. By Hahn--Banach Theorem, there exists $y^*\in S_{F^*}$ such that $\left|y^*(f(z))\right|=\left\|f(z)\right\|$. We have 
$$
\left\|f^t\right\|\geq\sup_{x^*\neq 0}\frac{\left\|f^t(x^*)\right\|_v}{\left\|x^*\right\|}
\geq\frac{\left\|y^*\circ f\right\|_v}{\left\|y^*\right\|}
\geq v(z)\left|y^*(f(z))\right|=v(z)\left\|f(z)\right\|>\left\|f\right\|_v-\varepsilon .
$$
Letting $\varepsilon\to 0$, one obtains $||f^t||\geq \left\|f\right\|_v$, as desired. Finally, note that  
\begin{align*}
(J_v\circ f^t)(y^*)(v(z)\delta_z)&=J_v(f^t(y^*))(v(z)\delta_z)=J_v(y^*\circ f)(v(z)\delta_z)\\
                                 &=v(z)(y^*\circ f)(z)=v(z)y^*(f(z))\\
                                 &=y^*(T_f(v(z)\delta_z))=(T_f)^*(y^*)(v(z)\delta_z)
\end{align*}
for all $y^*\in F^*$ and $z\in U$, where $(T_f)^*\colon F^*\to G^\infty_v(U)^*$ is the adjoint operator of $T_f$. Since $ G^\infty_v(U)=\overline{\lin}(\At_{G^\infty_v(U)})$, we deduce that $J_v\circ f^t=(T_f)^*$. So we have proved the following.

\begin{proposition}\label{prop-A}
Let $U$ be an open subset of a complex Banach space $E$, $v$ be a weight on $U$ and $F$ be a complex Banach space. If $f\in H^\infty_v(U,F)$, then $f^t\in L(F^*, H^\infty_v(U))$ with $||f^t||=\left\|f\right\|_v$ and $f^t=J_v^{-1}\circ(T_f)^*$. $\hfill\Box$
\end{proposition}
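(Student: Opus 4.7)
The plan is to verify three assertions in sequence: that $f^t$ takes values in $H^\infty_v(U)$ with $\|f^t\|\leq\|f\|_v$, that the reverse norm inequality holds, and that $f^t=J_v^{-1}\circ(T_f)^*$.

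First I would fix $y^*\in F^*$ and observe that $y^*\circ f\in H(U)$, since post-composing a holomorphic map with a continuous linear functional preserves holomorphy. The pointwise estimate $v(z)|y^*(f(z))|\leq\|y^*\|\|f\|_v$ for all $z\in U$ shows $y^*\circ f\in H^\infty_v(U)$ with $\|f^t(y^*)\|_v\leq\|y^*\|\|f\|_v$. Linearity of $f^t$ is immediate from the pointwise definition, so $f^t\in L(F^*,H^\infty_v(U))$ with $\|f^t\|\leq\|f\|_v$.

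For the reverse inequality I would invoke Hahn--Banach: given $\varepsilon>0$, pick $z\in U$ with $v(z)\|f(z)\|>\|f\|_v-\varepsilon$, then select $y^*\in S_{F^*}$ satisfying $|y^*(f(z))|=\|f(z)\|$. The bound $\|f^t\|\geq\|f^t(y^*)\|_v\geq v(z)|y^*(f(z))|=v(z)\|f(z)\|>\|f\|_v-\varepsilon$ then yields $\|f^t\|\geq\|f\|_v$ as $\varepsilon\to 0$, establishing the norm identity.

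For the identification with $J_v^{-1}\circ(T_f)^*$, I would test both sides on atoms. Using $T_f\circ\Delta_v=f$ from Theorem \ref{main-theo}(v), a direct computation gives
$$(J_v\circ f^t)(y^*)(v(z)\delta_z)=v(z)\,y^*(f(z))=y^*(T_f(v(z)\delta_z))=((T_f)^*(y^*))(v(z)\delta_z)$$
for all $y^*\in F^*$ and $z\in U$. Since $G^\infty_v(U)=\overline{\lin}(\At_{G^\infty_v(U)})$ by Theorem \ref{main-theo}(iv), both $(J_v\circ f^t)(y^*)$ and $(T_f)^*(y^*)$ are continuous linear functionals on $G^\infty_v(U)$ that agree on a dense subspace, hence agree everywhere. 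Applying the isometric isomorphism $J_v^{-1}$ from Theorem \ref{main-theo}(i) yields $f^t=J_v^{-1}\circ(T_f)^*$. No step presents a genuine obstacle; the most delicate point is simply ensuring the density argument in the final step is applied on $G^\infty_v(U)$, which is where the functionals live, rather than on the larger space $H^\infty_v(U)^*$.
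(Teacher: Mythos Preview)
Your proof is correct and follows essentially the same approach as the paper: the same pointwise estimate for $\|f^t\|\leq\|f\|_v$, the same Hahn--Banach argument for the reverse inequality, and the same verification of $J_v\circ f^t=(T_f)^*$ on atoms followed by the density of $\overline{\lin}(\At_{G^\infty_v(U)})$ in $G^\infty_v(U)$.
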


We make a brief pause in our study on the $WH^\infty$-BPB property to show that this transposition permits us to identify the spaces $H^\infty_v(U,F)$ and $H^\infty_{vK}(U,F)$ with certain distinguished subspaces of operators. %We first see that the mapping $f\mapsto f^t$ identifies $H^\infty_v(U,F)$ with the subspace of $L(F^*,H^\infty_v(U))$ formed by all weak*-to-weak* continuous linear operators from $F^*$ into $H^\infty_v(U)$.

\begin{proposition}\label{teo-4-1}
Let $U$ be an open subset of a complex Banach space $E$, $v$ be a weight on $U$ and $F$ be a complex Banach space. Then $f\mapsto f^t$ is an isometric isomorphism from $H^\infty_v(U,F)$ onto $L((F^*,w^*);(H^\infty_v(U),w^*))$.
\end{proposition}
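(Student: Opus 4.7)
The plan is to leverage Proposition \ref{prop-A}, which gives the identity $f^t = J_v^{-1}\circ (T_f)^*$, together with the classical duality fact that a linear operator between dual Banach spaces is $w^*$-$w^*$ continuous if and only if it is the adjoint of a bounded linear operator between the preduals. Isometry and linearity are already recorded in Proposition \ref{prop-A}, so only well-definedness (i.e., that $f^t$ is $w^*$-$w^*$ continuous) and surjectivity remain.

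For well-definedness, I would first recall that the $w^*$ topology on $H^\infty_v(U)$ is the one transferred from the canonical $w^*$ topology on $G^\infty_v(U)^*$ through the isometric isomorphism $J_v$ of Theorem \ref{main-theo}(i); in particular, $J_v$ is itself a $w^*$-$w^*$ homeomorphism. For any $f\in H^\infty_v(U,F)$, the operator $(T_f)^*\colon F^*\to G^\infty_v(U)^*$ is $w^*$-$w^*$ continuous as the adjoint of the bounded operator $T_f\in L(G^\infty_v(U),F)$ provided by Theorem \ref{main-theo}(v). Composing with $J_v^{-1}$, the identity $f^t = J_v^{-1}\circ (T_f)^*$ from Proposition \ref{prop-A} gives that $f^t\in L((F^*,w^*);(H^\infty_v(U),w^*))$. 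Together with the isometric equality $\|f^t\|=\|f\|_v$ from Proposition \ref{prop-A}, this shows the map $f\mapsto f^t$ is a well-defined linear isometry into the claimed space.

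For surjectivity, fix $S\in L((F^*,w^*);(H^\infty_v(U),w^*))$ and consider $J_v\circ S\colon F^*\to G^\infty_v(U)^*$, which is the composition of two $w^*$-$w^*$ continuous linear operators and hence $w^*$-$w^*$ continuous. The classical duality result cited above then yields a (necessarily bounded) operator $T\in L(G^\infty_v(U),F)$ with $T^*=J_v\circ S$. By Theorem \ref{main-theo}(vi) there is a unique $f\in H^\infty_v(U,F)$ with $T_f=T$, and then
\[
f^t \;=\; J_v^{-1}\circ (T_f)^* \;=\; J_v^{-1}\circ T^* \;=\; J_v^{-1}\circ J_v\circ S \;=\; S,
\]
establishing surjectivity. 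The isometry property transfers $\|S\|=\|T^*\|=\|T\|=\|f\|_v=\|f^t\|$, so the conclusion follows.

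The only genuinely delicate point is to pin down what the ``$w^*$ topology'' on $H^\infty_v(U)$ means in the statement and to verify that $J_v$ is a $w^*$-$w^*$ homeomorphism for that choice; once this is fixed, the proof is an essentially formal application of Proposition \ref{prop-A}, Theorem \ref{main-theo}, and the standard characterization of $w^*$-$w^*$ continuous maps as adjoints.
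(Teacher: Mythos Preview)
Your proposal is correct and follows essentially the same approach as the paper's proof: both use Proposition \ref{prop-A} for the isometry and the identity $f^t=J_v^{-1}\circ(T_f)^*$, observe that $(T_f)^*$ is $w^*$-$w^*$ continuous, and obtain surjectivity by composing with $J_v$ and invoking the standard characterization of $w^*$-$w^*$ continuous operators as adjoints together with Theorem \ref{main-theo}. Your version is slightly more explicit about the meaning of the $w^*$ topology on $H^\infty_v(U)$ via $J_v$, which is a harmless clarification.
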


\begin{proof}
Let $f\in H^\infty_v(U,F)$. Hence $f^t=J_v^{-1}\circ (T_f)^*\in L((F^*,w^*);(H^\infty_v(U),w^*))$. 
%by Theorem \ref{main-theo} and \cite[Theorem 3.1.11]{Meg-98}. 
We have $||f^t||=\left\|f\right\|_v$ by Proposition \ref{prop-A}. To show the surjectivity of the mapping in the statement, let $T\in L((F^*,w^*);(H^\infty_v(U),w^*))$. Then the mapping $J_v\circ T$ is in $L((F^*,w^*);(G^\infty_v(U)^*,w^*))$ and therefore there is a $S\in L(G^\infty_v(U),F)$ such that $S^*=J_v\circ T$. By Theorem \ref{main-theo}, there exists $f\in H^\infty_v(U,F)$ such that $T_f=S$, and thus $T=J_v^{-1}\circ(T_f)^*=f^t$, as desired. 
\end{proof}

The next result contains a version of Schauder Theorem for mappings in $H^\infty_{vK}(U,F)$.

\begin{theorem}\label{teo-4-2}
Let $U$ be an open subset of a complex Banach space $E$, $v$ be a weight on $U$ and $F$ be a complex Banach space. For any $f\in H^\infty_v(U,F)$, the following assertions are equivalent:
\begin{enumerate}
  \item $f\in H^\infty_{vK}(U,F)$.
	\item $f^t\colon F^*\to H^\infty_v(U)$ is compact.
	\item $f^t\colon F^*\to H^\infty_v(U)$ is bounded-weak*-to-norm continuous. 
	\item $f^t\colon F^*\to H^\infty_v(U)$ is compact and bounded-weak*-to-weak continuous. 
	\item $f^t\colon F^*\to H^\infty_v(U)$ is compact and weak*-to-weak continuous.
\end{enumerate}
\end{theorem}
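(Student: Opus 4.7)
My strategy is to reduce every assertion to a property of the adjoint $(T_f)^* \colon F^* \to G^\infty_v(U)^*$ via the identity $f^t = J_v^{-1}\circ(T_f)^*$ from Proposition \ref{prop-A}. Because $J_v^{-1}$ is an isometric isomorphism between $G^\infty_v(U)^*$ and the dual Banach space $H^\infty_v(U)$, it is simultaneously a homeomorphism for the norm, weak, and ($G^\infty_v(U)$-induced) weak* topologies. Consequently, any compactness or continuity property asked of $f^t$ is equivalent to the corresponding property of $(T_f)^*$, and the five equivalences become classical operator-theoretic facts.

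For $(1)\Leftrightarrow(2)$ I invoke Theorem \ref{main-theo}(6), which identifies $f\in H^\infty_{vK}(U,F)$ with $T_f\in K(G^\infty_v(U),F)$; Schauder's theorem then gives compactness of $(T_f)^*$, hence of $f^t$. For $(2)\Leftrightarrow(3)$ I use the classical criterion: an operator $T\colon X\to Y$ is compact iff $T^*\colon Y^*\to X^*$ is $bw^*$-to-norm continuous. One direction follows because $(T_f)^*(B_{F^*})$ is the $bw^*$-to-norm continuous image of the $w^*$-compact set $B_{F^*}$, hence norm-compact; the other follows because relative compactness of $T_f(B_{G^\infty_v(U)})$ makes the family $\{(T_f)^*y^* : y^*\in B_{F^*}\}$ uniformly equicontinuous, so $(T_f)^*$ converts $w^*$-convergent bounded nets into norm-convergent ones.

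For the rest I run the cycle $(2)\Rightarrow(5)\Rightarrow(4)\Rightarrow(3)$. The step $(2)\Rightarrow(5)$ rests on Gantmacher's theorem: compactness of $T_f$ implies weak compactness, so $(T_f)^*$ is $w^*$-to-weak continuous; composing with the weak homeomorphism $J_v^{-1}$ yields that $f^t$ is $w^*$-to-weak continuous, while $f^t$ is already compact by $(2)$. The implication $(5)\Rightarrow(4)$ is immediate since the $bw^*$ topology is finer than $w^*$, so $w^*$-to-weak continuity automatically implies $bw^*$-to-weak continuity.

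For $(4)\Rightarrow(3)$, take a net $(y_\alpha^*)\subseteq B_{F^*}$ with $y_\alpha^*\to y^*$ in $w^*$ (equivalently, in $bw^*$ on $B_{F^*}$). By the $bw^*$-to-weak continuity from $(4)$, $f^t(y_\alpha^*)\to f^t(y^*)$ weakly. Compactness of $f^t$ makes $f^t(B_{F^*})$ relatively norm-compact; on such a set weak and norm convergence coincide (every subnet has a norm-convergent subsubnet, whose limit must agree with the weak limit $f^t(y^*)$), so $f^t(y_\alpha^*)\to f^t(y^*)$ in norm. This yields $w^*$-to-norm continuity on $B_{F^*}$, which by linearity is the same as $bw^*$-to-norm continuity. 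I expect the main subtlety to be in $(2)\Rightarrow(5)$: one must carefully transfer the $w^*$-to-weak continuity of $(T_f)^*$ (as a map into $G^\infty_v(U)^*$) through $J_v^{-1}$ into $w^*$-to-weak continuity of $f^t$ globally on $F^*$, which is where the hypothesis that $J_v$ is a weak homeomorphism (and not merely a $w^*$ homeomorphism) is essential.
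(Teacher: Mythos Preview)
Your proof is correct and follows essentially the same strategy as the paper for $(1)\Leftrightarrow(2)$ (Schauder via Theorem~\ref{main-theo}) and $(1)\Leftrightarrow(3)$ (the classical characterisation of compactness by $bw^*$-to-norm continuity of the adjoint, which the paper cites as \cite[Theorem 3.4.16]{Meg-98}); both arguments hinge on the factorisation $f^t=J_v^{-1}\circ(T_f)^*$ just as you do.

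The only difference is in how the remaining equivalences $(3)\Leftrightarrow(4)\Leftrightarrow(5)$ are handled. The paper dispatches them in one line by citing \cite[Proposition 3.1]{Kim-13}, whereas you supply a self-contained cycle $(2)\Rightarrow(5)\Rightarrow(4)\Rightarrow(3)$ using Gantmacher's theorem for $(2)\Rightarrow(5)$ and a direct compactness argument (weak convergence plus relative norm-compactness forces norm convergence) for $(4)\Rightarrow(3)$. Your route is a touch more elementary and independent of the external reference; the paper's is shorter. One small remark: the ``main subtlety'' you flag at the end is not really a subtlety at all, since any isometric isomorphism between Banach spaces is automatically a weak-to-weak homeomorphism, so the transfer through $J_v^{-1}$ in $(2)\Rightarrow(5)$ is immediate.
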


\begin{proof}
$(i)\Leftrightarrow (ii)$: applying Theorem \ref{main-theo}, the Schauder Theorem, and the ideal property of compact operators between Banach spaces, %\cite[Proposition 3.4.10]{Meg-98}, 
we have 
\begin{align*}
f\in H^\infty_{vK}(U,F)
&\Leftrightarrow T_f\in K(G^\infty_v(U),F)\\
&\Leftrightarrow (T_f)^*\in K(F^*,G^\infty_v(U)^*)\\
&\Leftrightarrow f^t=J_v^{-1}\circ(T_f)^*\in K(F^*, H^\infty_v(U)).
\end{align*}
$(i)\Leftrightarrow (iii)$: similarly,  
\begin{align*}
f\in H^\infty_{vK}(U,F)
&\Leftrightarrow T_f\in K(G^\infty_v(U),F)\\
&\Leftrightarrow (T_f)^*\in L((F^*,bw^*); G^\infty_v(U)^*)\\
&\Leftrightarrow f^t=J_v^{-1}\circ(T_f)^*\in L((F^*,bw^*); H^\infty_v(U)),
\end{align*}
by Theorem \ref{main-theo} and \cite[Theorem 3.4.16]{Meg-98}. 

$(iii)\Leftrightarrow (iv)\Leftrightarrow (v)$: it follows from \cite[Proposition 3.1]{Kim-13}.
\end{proof}

%We also can identify $H^\infty_{vK}(U,F)$ with the subspace of $L((F^*,w^*);(G^\infty_v(U),w^*))$ consisting of all bounded-weak*-to-norm continuous linear operators from $F^*$ into $ H^\infty_v(U)$.

\begin{proposition}\label{cor-4-1}
Let $U$ be an open subset of a complex Banach space $E$, $v$ be a weight on $U$ and $F$ be a complex Banach space. Then $f\mapsto f^t$ is an isometric isomorphism from $H^\infty_{vK}(U,F)$ onto $ L((F^*,bw^*); H^\infty_v(U))$.
\end{proposition}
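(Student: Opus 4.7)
The plan is to deduce this statement by combining Proposition \ref{teo-4-1} with the equivalence (i)$\Leftrightarrow$(iii) in Theorem \ref{teo-4-2}, bridging them by a Krein--Smulian argument.

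The easy half comes for free: if $f\in H^\infty_{vK}(U,F)$, Theorem \ref{teo-4-2} places $f^t$ in $L((F^*,bw^*); H^\infty_v(U))$; linearity of $f\mapsto f^t$ is transparent from the definition; and $\|f^t\|=\|f\|_v$ is exactly Proposition \ref{prop-A}. The only real content is surjectivity.

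For surjectivity, I would take $T\in L((F^*,bw^*); H^\infty_v(U))$ and first show that $T$ is actually $(w^*,w^*)$-continuous from $F^*$ into $H^\infty_v(U)$. Once this upgrade is in hand, Proposition \ref{teo-4-1} produces an $f\in H^\infty_v(U,F)$ with $f^t=T$, and then Theorem \ref{teo-4-2} (iii)$\Rightarrow$(i), applied to this $f$, forces $f\in H^\infty_{vK}(U,F)$, completing the proof.

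The $(w^*,w^*)$-continuity upgrade is the one substantive step, and it is where I expect the main (modest) subtlety. Fix $\phi\in G^\infty_v(U)$ and consider the scalar functional $y^*\mapsto J_v(T(y^*))(\phi)=\phi(T(y^*))$ on $F^*$. Evaluation at $\phi$ is norm-continuous on $H^\infty_v(U)\cong G^\infty_v(U)^*$, so its composition with the $(bw^*,\|\cdot\|)$-continuous operator $T$ is $bw^*$-continuous on $F^*$. The classical Krein--Smulian theorem then promotes any $bw^*$-continuous linear functional on $F^*$ to a $w^*$-continuous one, i.e., to evaluation at some element of $F$. Varying $\phi\in G^\infty_v(U)$ gives that $J_v\circ T$, and hence $T$, is $(w^*,w^*)$-continuous. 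Beyond this Krein--Smulian invocation, every step is a direct application of tools already developed in the paper, so I foresee no further obstacle.
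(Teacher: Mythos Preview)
Your proposal is correct and follows essentially the same path as the paper: both arguments upgrade $bw^*$-to-norm continuity of $T$ to $(w^*,w^*)$-continuity by testing against each $\phi\in G^\infty_v(U)$ and invoking Krein--Smulian/Banach--Dieudonn\'e, then identify $T$ as $f^t$ and read off compactness. The only difference is organizational---you invoke Proposition~\ref{teo-4-1} and Theorem~\ref{teo-4-2} as black boxes, whereas the paper unpacks the adjoint and compactness steps inline via the linearisation $T_f$ and standard duality facts from Megginson.
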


\begin{proof}
Let $f\in H^\infty_{vK}(U,F)$. Then $f^t\in L((F^*,bw^*); H^\infty_v(U))$ by Theorem \ref{teo-4-2} and $||f^t||=\left\|f\right\|_v$ by Proposition \ref{prop-A}. To prove the surjectivity, take $T\in L((F^*,bw^*); H^\infty_v(U))$. Then $J_v\circ T\in L((F^*,bw^*); G^\infty_v(U)^*)$. If $Q_{G^\infty_v(U)}$ denotes the natural injection from $G^\infty_v(U)$ into $G^\infty_v(U)^{**}$, then $Q_{G^\infty_v(U)}(\phi)\circ J_v\circ T\in L((F^*,bw^*);\C)$ for all $\phi\in G^\infty_v(U)$ and, by \cite[Theorem 2.7.8]{Meg-98}, $Q_{G^\infty_v(U)}(\phi)\circ J_v\circ T\in L((F^*,w^*);\C)$ for all $\phi\in G^\infty_v(U)$, that is, $J_v\circ T\in L((F^*,w^*);(G^\infty_v(U)^*,w^*))$ by \cite[Corollary 2.4.5]{Meg-98}. Hence $ J_v\circ T=S^*$ for some $S\in L(G^\infty_v(U),F)$ by \cite[Theorem 3.1.11]{Meg-98}. Note that $S^*\in L((F^*,bw^*);G^\infty_v(U)^*)$ and this means that $S\in K(G^\infty_v(U),F)$ by \cite[Theorem 3.4.16]{Meg-98}. Now, $S=T_f$ for some $f\in H^\infty_{vK}(U,F)$ by Theorem \ref{main-theo}. Finally, we have $T=J_v^{-1}\circ S^*= J_v^{-1}\circ (T_f)^*=f^t$.
\end{proof}

Returning to the $WH^\infty$-BPB property, we can adapt the proof of \cite[Theorem 2.2]{AcoAroGarMae-08} to yield the next result in the weighted holomorphic setting.

\begin{theorem}\label{prop-beta}
Let $U$ be an open subset of a complex Banach space $E$ and let $v$ be a weight on $U$. Suppose that $H^\infty_v(U)$ has the $WH^\infty$-BPB property and let $F$ be a complex Banach space satisfying the property $\beta$. Then $H^\infty_v(U,F)$ has the $WH^\infty$-BPB property. 
\end{theorem}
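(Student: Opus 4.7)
The plan is to adapt the proof of \cite[Theorem 2.2]{AcoAroGarMae-08} to the weighted holomorphic setting. Let $\{(y_i, y_i^*) : i \in I\} \subseteq F \times F^*$ with constant $\rho \in [0,1)$ witness property $\beta$ for $F$, and let $\varepsilon \mapsto \eta_s(\varepsilon)$ be a BPB function for $H^\infty_v(U)$. The strategy is to exploit the norming formula $\|y\| = \sup_i |y_i^*(y)|$ in order to pick a functional $y_{i_0}^*$ that essentially detects $v(z)\|f(z)\|$, apply the scalar $WH^\infty$-BPB property to the composition $h := y_{i_0}^* \circ f$, and then reconstruct a vector-valued perturbation of $f$ by suitably inflating its $y_{i_0}$-component so that, thanks to the separation $|y_j^*(y_{i_0})| \leq \rho < 1$ for $j \neq i_0$, the $i_0$-th coordinate pointwise dominates the others on $U$.

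More precisely, given $f \in S_{H^\infty_v(U,F)}$, $\lambda \in \T$ and $z \in U$ with $v(z)\|f(z)\| > 1 - \eta(\varepsilon)$ for $\eta(\varepsilon)$ small enough (to be fixed at the end), condition (iii) of Definition \ref{def-property-beta}, with approximation error $\eta(\varepsilon)/v(z)$, provides $i_0 \in I$ such that $v(z)|y_{i_0}^*(f(z))| > 1 - 2\eta(\varepsilon)$. Setting $h := y_{i_0}^* \circ f \in H^\infty_v(U)$ gives $\|h\|_v \leq 1$ and $v(z)|h(z)| > 1 - 2\eta(\varepsilon)$, and its normalization $h_1 := h/\|h\|_v \in S_{H^\infty_v(U)}$ still satisfies $v(z)|h_1(z)| > 1 - 2\eta(\varepsilon)$. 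Requiring $2\eta(\varepsilon) \leq \eta_s(\varepsilon_1)$ for some $\varepsilon_1 > 0$ to be chosen later, the scalar $WH^\infty$-BPB applied to $(h_1, \lambda, z)$ produces $\tilde h \in S_{H^\infty_v(U)}$, $\tilde\lambda \in \T$ and $\tilde z \in U$ with $v(\tilde z)|\tilde h(\tilde z)| = 1$, $\|h_1 - \tilde h\|_v < \varepsilon_1$ and $\|\lambda v(z)\delta_z - \tilde\lambda v(\tilde z)\delta_{\tilde z}\| < \varepsilon_1$; the triangle inequality and the estimate $\|h - h_1\|_v = 1 - \|h\|_v < 2\eta(\varepsilon)$ then yield $\|h - \tilde h\|_v < \varepsilon_1 + 2\eta(\varepsilon)$.

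Now set $K := (1+\rho)/(1-\rho)$, $\alpha := \rho\|\tilde h - h\|_v/(1-\rho)$, $z_0 := \tilde z$, $\lambda_0 := \tilde\lambda$, and define
$$
f_0(w) := \frac{1}{1+\alpha}\bigl(f(w) + ((1+\alpha)\tilde h(w) - h(w))\, y_{i_0}\bigr), \qquad w \in U.
$$
A direct check shows $y_{i_0}^* \circ f_0 = \tilde h$, so $v(w)|y_{i_0}^*(f_0(w))| \leq 1$ with equality at $w = z_0$; for $j \neq i_0$, the bound $|y_j^*(y_{i_0})| \leq \rho$ together with $v(w)|(1+\alpha)\tilde h(w) - h(w)| \leq \alpha + \|\tilde h - h\|_v$ gives $v(w)|y_j^*(f_0(w))| \leq (1 + \rho(\alpha + \|\tilde h - h\|_v))/(1+\alpha) \leq 1$, where the last inequality is precisely what the choice of $\alpha$ was calibrated to ensure. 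Condition (iii) of property $\beta$ thus forces $\|f_0\|_v = 1 = v(z_0)\|f_0(z_0)\|$, so $f_0 \in S_{H^\infty_v(U,F)}$ attains its weighted norm at $z_0$; expanding $f - f_0$ gives $\|f - f_0\|_v \leq (2\alpha + \|\tilde h - h\|_v)/(1+\alpha) \leq K\|\tilde h - h\|_v < K(\varepsilon_1 + 2\eta(\varepsilon))$. Finally, picking $\varepsilon_1 := \varepsilon/(3K)$ and $\eta(\varepsilon) := \min\{\varepsilon/(6K),\, \eta_s(\varepsilon_1)/2\}$ makes $\|f - f_0\|_v < 2\varepsilon/3 < \varepsilon$ and $\|\lambda v(z)\delta_z - \lambda_0 v(z_0)\delta_{z_0}\| < \varepsilon_1 < \varepsilon$ hold simultaneously.

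The main technical obstacle is finding the right construction of $f_0$: the naive perturbation $f + (\tilde h - h)y_{i_0}$ only forces $v(z_0)|y_{i_0}^*(f_0(z_0))| = 1$ but leaves the other coordinates essentially unchanged, so the weighted sup-norm need not be attained at $z_0$. The precisely tuned inflation factor $1+\alpha$ on the $y_{i_0}$-direction uses the gap $1-\rho$ in property $\beta$ to guarantee pointwise domination of the $i_0$-coordinate, which is the standard Lindenstrauss ``exposing direction'' trick translated into the weighted holomorphic context.
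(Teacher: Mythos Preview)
Your proof is correct and follows essentially the same route as the paper's: select $i_0$ via condition (iii) of property $\beta$, apply the scalar $WH^\infty$-BPB property to the normalized composition $y_{i_0}^*\circ f$, and then perturb $f$ in the $y_{i_0}$-direction so that the $i_0$-coordinate dominates all others. The only difference is bookkeeping: the paper uses the fixed inflation $g = f + \bigl((1+\varepsilon/2)g_0 - y_{i_0}^*\circ f\bigr)y_{i_0}$ and checks the parameter inequality $1+\rho(\varepsilon/2+2\gamma)<(1+\varepsilon/2)(1-3\gamma)$, whereas you use the adaptive factor $1+\alpha$ with $\alpha=\rho\|\tilde h-h\|_v/(1-\rho)$ and divide by $1+\alpha$, which has the minor advantage that $f_0$ lands directly in $S_{H^\infty_v(U,F)}$ without a separate normalization step.
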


\begin{proof}
By hypothesis, $H^\infty_v(U)$ has the $WH^\infty$-BPB property by a function $\varepsilon\mapsto\eta(\varepsilon)$. Take a set $\left\{(y_i,y^*_i)\colon i\in I\right\}\subseteq F\times F^*$ and a number $0\leq\rho<1$ satisfying Definition \ref{def-property-beta}. Let $0<\varepsilon<1$ and choose $0<\gamma<\varepsilon/4$ so that 
$$
1+\rho\left(\frac{\varepsilon}{2}+2\gamma\right)<\left(1+\frac{\varepsilon}{2}\right)(1-3\gamma).
$$
Consider $f\in S_{H^\infty_v(U,F)}$, $\lambda\in\T$ and $z\in U$ such that $v(z)\left\|f(z)\right\|>1-\eta(\gamma)$. Take $i\in I$ so that $v(z)\left|y^*_i(f(z))\right|>1-\eta(\gamma)$. Note that $f^t(y^*_i)\in H^\infty_v(U)$ with 
$$
\left\|f^t(y^*_i)\right\|_v\geq v(z)\left|y^*_i(f(z))\right|>1-\eta(\gamma)>1-\gamma .
$$
By hypothesis there exist $g_0\in S_{H^\infty_v(U)}$, $\lambda_0\in\T$ and $z_0\in U$ such that $\left\|g_0-f^t(y^*_i)/\left\|f^t(y^*_i)\right\|_v\right\|_v<\gamma$, $v(z_0)\left|g_0(z_0)\right|=1$ and $\left\|\lambda v(z)\delta_z-\lambda_0 v(z_0)\delta_{z_0}\right\|<\gamma$. Hence we have 
$$
\left\|g_0-f^t(y^*_i)\right\|_v\leq \left\|g_0-\frac{f^t(y^*_i)}{\left\|f^t(y^*_i)\right\|_v}\right\|_v+\left\|\frac{f^t(y^*_i)}{\left\|f^t(y^*_i)\right\|_v}-f^t(y^*_i)\right\|_v
<\gamma+\left(1-\left\|f^t(y^*_i)\right\|_v\right)<2\gamma .
$$
Define $g\colon U\to F$ by 
$$
g(y)=f(y)+\left[\left(1+\frac{\varepsilon}{2}\right)g_0(y)-f^t(y^*_i)(y)\right]y_i\qquad (y\in U).
$$
Clearly, $g\in H^\infty_v(U,F)$ with $\left\|f-g\right\|_v<\varepsilon$ since 
$$
v(y)\left\|f(y)-g(y)\right\|\leq v(y)\frac{\varepsilon}{2}\left|g_0(y)\right|+v(y)\left|g_0(y)-f^t(y^*_i)(y)\right|\leq\frac{\varepsilon}{2}+2\gamma<\varepsilon
$$ 
for all $y\in U$. We will prove that $g$ attains its weighted supremum uniform norm at $z_0$. Note that 
$$
g^t(y^*)=f^t(y^*)+y^*(y_i)\left(\frac{\varepsilon}{2}g_0+g_0-f^t(y^*_i)\right)\qquad (y^*\in F^*),
$$
and using the properties of $\left\{(y_i,y^*_i)\colon i\in I\right\}$ in Definition \ref{def-property-beta}, it holds that 
\begin{align*}
\left\|g^t\right\|&=\sup_{y^*\in S_{F^*}}\left\|g^t(y^*)\right\|_v=\sup_{y^*\in S_{F^*}}\sup_{y\in U}v(y)\left|g^t(y^*)(y)\right|=\sup_{y\in U}\sup_{y^*\in S_{F^*}}\left|y^*(v(y)g(y))\right|\\
&=\sup_{y\in U}\left\|v(y)g(y)\right\|=\sup_{y\in U}\sup_{i\in I}\left|y^*_i(v(y)g(y))\right|=\sup_{i\in I}\sup_{y\in U}v(y)\left|g^t(y^*_i)(y)\right|=\sup_{i\in I}\left\|g^t(y^*_i)\right\|_v.
\end{align*}
Given $j\in I$ with $j\neq i$, we have 
$$
\left\|g^t(y^*_j)\right\|_v\leq 1+\rho\left(\frac{\varepsilon}{2}+2\gamma\right).
$$
Moreover, $g^t(y^*_i)=(1+\varepsilon/2)g_0$ and thus
$$
\left\|g^t(y^*_i)\right\|_v=\left(1+\frac{\varepsilon}{2}\right)\left\|g_0\right\|_v>\left(1+\frac{\varepsilon}{2}\right)(1-3\gamma)>1+\rho\left(\frac{\varepsilon}{2}+2\gamma\right).
$$
Therefore, $\left\|g^t\right\|=\left\|g^t(y^*_i)\right\|_v$. By Proposition \ref{prop-A}, we deduce 
\begin{align*}
\left\|g\right\|_v&=\left\|g^t\right\|=\left\|g^t(y^*_i)\right\|_v=\left(1+\frac{\varepsilon}{2}\right)\left\|g_0\right\|_v=\left(1+\frac{\varepsilon}{2}\right)v(z_0)\left|g_0(z_0)\right|\\
&=v(z_0)\left|y^*_i(g(z_0))\right|=\left|y^*_i(v(z_0)g(z_0))\right|\leq v(z_0)\left\|g(z_0)\right\|\leq\left\|g\right\|_v,
\end{align*}
hence $\left\|g\right\|_v=v(z_0)\left\|g(z_0)\right\|$, as required.
\end{proof}

We now obtain a similar result for the norm density of $H^\infty_{v\NA}(U,F)$ under the property quasi-$\beta$, a weaker property than the property $\beta$ which was introduced by Acosta, Aguirre and Pay\'a \cite{AcoAguPa-96}.

Note that every Banach space with the property $\beta$ has also the property quasi-$\beta$. For a finite-dimensional Banach space with the property quasi-$\beta$ but not $\beta$, see \cite[Example 5]{AcoAguPa-96}.

\begin{definition}\cite{AcoAguPa-96}\label{def-property-quasi-beta}
A Banach space $F$ has \emph{the property quasi-$\beta$} if there is a subset $A\subseteq S_{F^*}$, a mapping $\sigma\colon A\to S_F$ and a function $\rho\colon A\to\mathbb{R}$ such that the following conditions hold:
\begin{enumerate}
	\item $y^*(\sigma(y^*))=1$ for every $y^*\in A$.
	\item $\left|z^*(\sigma(y^*))\right|\leq\rho(y^*)<1$ for every $y^*,z^*\in A$ with $y^*\neq z^*$.
  \item For every $e^*\in\Ext(B_{F^*})$, there exist a set $A_{e^*}\subseteq A$ and a scalar $t\in\T$ such that $te^*\in\overline{A_{e^*}}^{w^*}$ and $\sup\left\{\rho(y^*)\colon y^*\in A_{e^*}\right\}<1$.
\end{enumerate}
\end{definition}

\begin{theorem}\label{prop-quasi}
Let $U$ be an open subset of a complex Banach space $E$ and let $v$ be a weight on $U$. Suppose that $H^\infty_{v\NA}(U)$ is norm dense in $H^\infty_v(U)$ and let $F$ be a complex Banach space satisfying the property quasi-$\beta$. Then $H^\infty_{v\NA}(U,F)$ is norm dense in $H^\infty_v(U,F)$. 
\end{theorem}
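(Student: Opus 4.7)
The plan is to mimic the Acosta--Aguirre--Pay\'a strategy \cite{AcoAguPa-96} through the transpose map $f\mapsto f^t$ of Proposition~\ref{prop-A}. I would first establish the \emph{extreme-point formula}
$$\left\|h\right\|_v \;=\; \sup_{e^*\in\Ext(B_{F^*})}\left\|h^t(e^*)\right\|_v \;=\; \sup_{y^*\in A}\left\|h^t(y^*)\right\|_v \qquad (h\in H^\infty_v(U,F)).$$
The first equality is Bauer's maximum principle applied, at each fixed $y\in U$, to the $w^*$-continuous convex map $y^*\mapsto|y^*(v(y)h(y))|$ on the $w^*$-compact set $B_{F^*}$. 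For the second, for each $e^*\in\Ext(B_{F^*})$ pick (by condition~(iii)) a net $(y^*_\lambda)\subseteq A_{e^*}$ with $y^*_\lambda\xrightarrow{w^*}te^*$; then $v(y)|y^*_\lambda(h(y))|\to v(y)|e^*(h(y))|$ for each $y$, yielding $\liminf_\lambda\|h^t(y^*_\lambda)\|_v\geq\|h^t(e^*)\|_v$, and hence $\sup_{y^*\in A}\|h^t(y^*)\|_v\geq\sup_{e^*}\|h^t(e^*)\|_v$.

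Given $\varepsilon>0$ and $f\in S_{H^\infty_v(U,F)}$, I would fix small $\delta,\gamma>0$ and proceed as follows. By the formula above, choose $e^*\in\Ext(B_{F^*})$ with $\|f^t(e^*)\|_v>1-\delta$ and $y_1\in U$ with $v(y_1)|e^*(f(y_1))|>1-2\delta$. Condition~(iii) supplies $A_{e^*}\subseteq A$, $t\in\T$ and $r:=\sup_{y^*\in A_{e^*}}\rho(y^*)<1$, and $w^*$-continuity at $v(y_1)f(y_1)$ yields some $y^*_0\in A_{e^*}$ with $\|f^t(y^*_0)\|_v>1-3\delta$. The scalar density hypothesis applied to $f^t(y^*_0)/\|f^t(y^*_0)\|_v\in S_{H^\infty_v(U)}$ produces $g_0\in S_{H^\infty_{v\NA}(U)}$ with $\|g_0-f^t(y^*_0)\|_v<\gamma+3\delta$, attaining its norm at some $z_0\in U$. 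With $\alpha>1$ to be specified, I define
$$f_0(y) \;=\; f(y)+\bigl[\alpha g_0(y)-f^t(y^*_0)(y)\bigr]\sigma(y^*_0), \qquad y\in U,$$
so that $f_0\in H^\infty_v(U,F)$ with $\|f-f_0\|_v\leq(\alpha-1)+\gamma+3\delta$.

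The key computation is $f_0^t(y^*)=f^t(y^*)+y^*(\sigma(y^*_0))[\alpha g_0-f^t(y^*_0)]$. In particular $f_0^t(y^*_0)=\alpha g_0$, hence $\|f_0^t(y^*_0)\|_v=\alpha$ attained at $z_0$; while for $y^*\in A\setminus\{y^*_0\}$, condition~(ii) gives $|y^*(\sigma(y^*_0))|\leq r$, so $\|f_0^t(y^*)\|_v\leq 1+r[(\alpha-1)+\gamma+3\delta]$, which is strictly less than $\alpha$ provided $\alpha-1>r(\gamma+3\delta)/(1-r)$. The extreme-point formula then forces $\|f_0\|_v=\sup_{y^*\in A}\|f_0^t(y^*)\|_v=\alpha$, and the sandwich
$\alpha=|y^*_0(v(z_0)f_0(z_0))|\leq v(z_0)\|f_0(z_0)\|\leq\|f_0\|_v=\alpha$
shows that $f_0$ attains its weighted supremum norm at $z_0$. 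Normalising $f_0$ to $f_0/\alpha$ adds at most $\alpha-1$ to the distance from $f$, so choosing $\delta,\gamma$ small and then $\alpha$ just above $1+r(\gamma+3\delta)/(1-r)$ makes the total error $2(\alpha-1)+\gamma+3\delta$ smaller than $\varepsilon$. I expect the main obstacle to be proving the extreme-point formula and using it twice (first to locate $y^*_0$, then to certify that $\|f_0\|_v=\alpha$ is witnessed on $A$); the rest is routine parameter tracking along the lines of Theorem~\ref{prop-beta}.
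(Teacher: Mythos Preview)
Your overall scheme---the Acosta--Aguirre--Pay\'a perturbation via the transpose, the extreme-point formula $\|h\|_v=\sup_{y^*\in A}\|h^t(y^*)\|_v$, and the definition of $f_0$---is exactly the route the paper takes. The extreme-point formula is fine, and once the parameters are in place the computation showing $\|f_0\|_v=\alpha=v(z_0)\|f_0(z_0)\|$ is correct.

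The gap is in the order of quantifiers. You fix $\delta,\gamma$ \emph{first}, then pick $e^*\in\Ext(B_{F^*})$ with $\|f^t(e^*)\|_v>1-\delta$, and only then learn $r=\sup_{y^*\in A_{e^*}}\rho(y^*)$. Your final error is (up to constants) $(\gamma+3\delta)\frac{1+r}{1-r}$, so to make it $<\varepsilon$ you need $\gamma+3\delta$ small relative to $1-r$. But $r$ depends on $e^*$, which depends on $\delta$; property quasi-$\beta$ gives no uniform bound on $r$ over all extreme points, so as $\delta\to 0$ the corresponding $r$ may drift to $1$ faster than $\delta$ shrinks. There is no way to close this loop with the ingredients you have listed.

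The paper breaks the circularity by inserting a preliminary step you omit: using Zizler's theorem \cite[Proposition~4]{Ziz-73}, one first approximates $T_f$ (within $\varepsilon/2$) by some $T_g\in S_{L(G^\infty_v(U),F)}$ whose \emph{adjoint attains its norm}; by \cite[Theorem~5.8]{Lim-78} it attains at an extreme point $e^*$. Now $e^*$---and hence $r$---is fixed once and for all, \emph{before} the small parameter $\gamma$ is chosen. One then runs your perturbation argument on $g$ (not on $f$) with $\gamma$ chosen small depending on $r$ and $\varepsilon$, producing $f_0$ with $\|g-f_0\|_v<\varepsilon/2$. The point is that the $\varepsilon/2$-approximation $f\rightsquigarrow g$ costs nothing in terms of $r$, so the dependency chain becomes $\varepsilon\to g,e^*,r\to\gamma\to f_0$ rather than your circular $\delta\to e^*\to r\to\delta$.
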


\begin{proof}
We essentially follow the proof of Theorem 2 in \cite{AcoAguPa-96}. Let $\varepsilon>0$ and $f\in S_{H^\infty_v(U,F)}$. By \cite[Proposition 4]{Ziz-73}, % and \cite[Theorem 3.1.11]{Meg-98}, 
there exists $T\in S_{L(G^\infty_v(U),F)}$ such that $\left\|T_f-T\right\|<\varepsilon/2$ and $T^*\in\NA(F^*, G^\infty_v(U)^*)$. By Theorem \ref{main-theo}, $T=T_g$ for some $g\in S_{H^\infty_v(U,F)}$, and we have 
$$
\left\|f-g\right\|_v=\left\|T_f-T_g\right\|=\left\|T_f-T\right\|<\frac{\varepsilon}{2}.
$$
By \cite[Theorem 5.8]{Lim-78}, $T^*$ and, consequently also $g^t=J_v^{-1}\circ (T_g)^*=J_v^{-1}\circ T^*\colon F^*\to H^\infty_v(U)$, attains its norm at a point $e^*\in\Ext(B_{F^*})$, and Definition \ref{def-property-quasi-beta} provides us a set $A_{e^*}\subseteq A$ and a scalar $t\in\T$ such that $te^*\in\overline{A_{e^*}}^{w^*}$ and $r:=\sup\left\{\rho(y^*)\colon y^*\in A_{e^*}\right\}<1$. 

Now, fix a number $0<\gamma<\varepsilon/8$ such that 
$$
1+r\left(\frac{\varepsilon}{4}+2\gamma\right)<\left(1+\frac{\varepsilon}{4}\right)(1-2\gamma)
$$
and since 
$$
1=\left\|g^t\right\|=\left\|g^t(te^*)\right\|_v=\sup\left\{\left\|g^t(y^*)\right\|_v\colon y^*\in A_{e^*}\right\},
$$
we can find $y^*_1\in A_{e^*}$ so that $\left\|g^t(y^*_1)\right\|_v>1-\gamma$. Since $g^t(y^*_1)\in H^\infty_v(U)$, by hypothesis, there exist $g_0\in S_{H^\infty_v(U)}$ and $z_0\in U$ such that $v(z_0)\left|g_0(z_0)\right|=1$ and $\left\|g_0-g^t(y^*_1)/\left\|g^t(y^*_1)\right\|_v\right\|_v<\gamma$. Consequently, we have 
$$
\left\|g_0-g^t(y^*_1)\right\|_v\leq \left\|g_0-\frac{g^t(y^*_1)}{\left\|g^t(y^*_1)\right\|_v}\right\|_v+\left\|\frac{g^t(y^*_1)}{\left\|g^t(y^*_1)\right\|_v}-g^t(y^*_1)\right\|_v
<\gamma+\left(1-\left\|g^t(y^*_i)\right\|_v\right)<2\gamma .
$$
Define $f_0\colon U\to F$ by 
$$
f_0(y)=g(y)+\left[\left(1+\frac{\varepsilon}{4}\right)g_0(y)-g^t(y^*_1)(y)\right]y_1\qquad (y\in U),
$$
where $y_1=\sigma(y^*_1)\in S_F$. Clearly, $f_0\in H^\infty_v(U,F)$ with $\left\|g-f_0\right\|_v<\varepsilon/2$ since 
$$
v(y)\left\|g(y)-f_0(y)\right\|\leq v(y)\frac{\varepsilon}{4}\left|g_0(y)\right|+v(y)\left|g_0(y)-g^t(y^*_1)(y)\right|\leq\frac{\varepsilon}{4}+2\gamma<\frac{\varepsilon}{2}
$$ 
for all $y\in U$. Hence 
$$
\left\|f-f_0\right\|_v\leq \left\|f-g\right\|_v+\left\|g-f_0\right\|_v<\varepsilon.
$$
If we show that $\left\|f_0\right\|_v=v(z_0)\left\|f_0(z_0)\right\|$, the proof will be complete. For it, note first that Condition (iii) in Definition \ref{def-property-quasi-beta} implies that the set $A\subseteq S_{F^*}$ is norming for $F$ %, that is, 
%$$
%\left\|y\right\|=\sup\left\{\left|y^*(y)\right|\colon y^*\in A\right\}\qquad (y\in F)
%$$
and, consequently, 
$$
\left\|(f_0)^t\right\|=\sup\left\{\left\|(f_0)^t(y^*)\right\|_v\colon y^*\in A\right\}.
$$
Since  
$$
(f_0)^t(y^*)=g^t(y^*)+y^*(y_1)\left(\frac{\varepsilon}{4}g_0+g_0-g^t(y^*_1)\right)\qquad (y^*\in F^*),
$$
we have  
$$
\left\|(f_0)^t(y^*)\right\|_v\leq 1+\rho(y^*_1)\left(\frac{\varepsilon}{4}+2\gamma\right)\leq 1+r\left(\frac{\varepsilon}{4}+2\gamma\right)\qquad (y^*\in A,\, y^*\neq y^*_1).
$$
Moreover, $(f_0)^t(y^*_1)=(1+\varepsilon/4)g_0$ and we obtain
$$
\left\|(f_0)^t(y^*_1)\right\|_v=\left(1+\frac{\varepsilon}{4}\right)\left\|g_0\right\|_v>\left(1+\frac{\varepsilon}{4}\right)(1-2\gamma)>1+r\left(\frac{\varepsilon}{4}+2\gamma\right).
$$
Therefore, $\left\|(f_0)^t\right\|=\left\|(f_0)^t(y^*_1)\right\|_v$. It follows that 
\begin{align*}
\left\|f_0\right\|_v&=\left\|(f_0)^t\right\|=\left\|(f_0)^t(y^*_1)\right\|_v=\left(1+\frac{\varepsilon}{4}\right)\left\|g_0\right\|_v=\left(1+\frac{\varepsilon}{4}\right)v(z_0)\left|g_0(z_0)\right|\\
&=v(z_0)\left|(f_0)^t(y^*_1)(z_0)\right|=\left|y^*_1(v(z_0)f_0(z_0))\right|\leq v(z_0)\left\|f_0(z_0)\right\|\leq\left\|f_0\right\|_v,
\end{align*}
and so $\left\|f_0\right\|_v=v(z_0)\left\|f_0(z_0)\right\|$. 
\end{proof}

%%%%%%%%%%%%%%%%%%%%%%%%%%%%%%%%%%%%%%%%%%%%%%%%%%%%%%%%%%%%%%%%%%%%%%%%%%%%%%%%%%%%%%%%%%%%%%%%%%%%%%%%%%%%%%%%%%%%%%%%%%%%%%%%%%%%%%%%%%%%%%%%%%%%%%%%%%%%%%%%%%%%%%%%%%%%%%%%%%%%%%%%%%%%%

\section{The $WH^\infty$-BPB property for mappings with a relatively compact $v$-range}\label{s3}

We now present some versions of the preceding results for mappings $f\in H^\infty_v(U,F)$ such that $vf$ has a relatively compact range in $F$.

\begin{lemma}\label{teo-21}
Let $U$ be an open subset of a complex Banach space $E$ and let $v$ be a weight on $U$. Assume that $\T\At_{G^\infty_v(U)}$ is a norm-closed set of uniformly strongly exposed points of $B_{G^\infty_v(U)}$. Then $H^\infty_{vK\NA}(U,F)$ is norm dense in $H^\infty_{vK}(U,F)$ for every complex Banach space $F$.
\end{lemma}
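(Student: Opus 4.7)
The plan is to mimic the argument of Lemma \ref{prop-1}, replacing $L(G^\infty_v(U),F)$ with $K(G^\infty_v(U),F)$ everywhere, and invoking the isometric identification $H^\infty_{vK}(U,F)\cong K(G^\infty_v(U),F)$ given by Theorem \ref{main-theo}(vi). The only genuine point to check is that the density argument of Lindenstrauss --- used in Lemma \ref{prop-1} through \cite[Proposition 1]{Lin-63} --- can be run inside the ideal of compact operators.

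First I would recall (or prove) the following compact refinement of Lindenstrauss's result: if $X$ is a Banach space containing a set $S\subseteq S_X$ of uniformly strongly exposed points such that $B_X=\overline{\co}(S)$, then for every Banach space $Y$ the set
\[
\bigl\{T\in K(X,Y)\colon \exists\, x\in\overline{S}\ \text{with}\ \|T(x)\|=\|T\|\bigr\}
\]
is norm dense in $K(X,Y)$. This follows by inspecting the construction in \cite[Proposition~1]{Lin-63}: given $T_0\in K(X,Y)$, one successively defines $T_{n+1}=T_n+\mu_n\,y_n\otimes x_n^*$ where $x_n^*\in S_{X^*}$ and $y_n\in Y$ are chosen so that $\mu_n y_n\otimes x_n^*$ is a rank-one (hence compact) correction of arbitrarily small norm, and $\sum_n\mu_n<\infty$. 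Since $K(X,Y)$ is a norm-closed subspace of $L(X,Y)$ containing all finite-rank operators, each $T_n$ is compact and so is the limit $T=\lim_nT_n$, which attains its norm at a point of $\overline{S}$.

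Next I would apply this refinement to $X=G^\infty_v(U)$ with $S=\T\At_{G^\infty_v(U)}$, which is by hypothesis a norm-closed set of uniformly strongly exposed points, and which satisfies $B_{G^\infty_v(U)}=\overline{\co}(\T\At_{G^\infty_v(U)})$ by Theorem \ref{main-theo}(iii). Let $f\in H^\infty_{vK}(U,F)$ and $\varepsilon>0$. By Theorem \ref{main-theo}(vi), $T_f\in K(G^\infty_v(U),F)$, so the compact Lindenstrauss refinement furnishes $T\in K(G^\infty_v(U),F)$ with $\|T-T_f\|<\varepsilon$ that attains its norm at some $\lambda v(z)\delta_z$ with $\lambda\in\T$ and $z\in U$. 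Applying Theorem \ref{main-theo}(vi) once more, $T=T_{f_0}$ for a unique $f_0\in H^\infty_{vK}(U,F)$. The same chain of isometric identities
\[
\|f_0\|_v=\|T_{f_0}\|=\|T(\lambda v(z)\delta_z)\|=v(z)\|f_0(z)\|,\qquad \|f-f_0\|_v=\|T_f-T\|<\varepsilon,
\]
as in the proof of Lemma \ref{prop-1} shows that $f_0\in H^\infty_{vK\NA}(U,F)$ and is $\varepsilon$-close to $f$, which yields the desired density.

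The only nontrivial point in the plan is the compact version of Lindenstrauss's proposition. If one prefers not to re-examine the proof in \cite{Lin-63}, an alternative is to use that finite-rank operators are dense in $K(G^\infty_v(U),F)$ whenever they are (this is automatic when $F$ has the approximation property, but not needed in general here, since the rank-one perturbation structure of Lindenstrauss's own argument already keeps us inside $K(G^\infty_v(U),F)$).
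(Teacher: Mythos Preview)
Your proposal is correct and follows essentially the same approach as the paper: both argue that the Lindenstrauss construction in \cite[Proposition~1]{Lin-63} stays inside $K(G^\infty_v(U),F)$ and then transfer the conclusion via the isometric identification of Theorem~\ref{main-theo}(vi). In fact you supply more justification than the paper does, which simply asserts that ``a reading of the proof'' of \cite[Proposition~1]{Lin-63} yields the compact version; your observation that each perturbation in Lindenstrauss's iteration is rank-one and that $K(X,Y)$ is norm closed is precisely the reason this reading works.
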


\begin{proof}
A reading of the proof of \cite[Proposition 1]{Lin-63} shows that for every Banach space $F$, the set
$$
\left\{T\in K( G^\infty_v(U),F)\colon \exists \phi\in \T\At_{G^\infty_v(U)} \, | \, \left\|T(\phi)\right\|=\left\|T\right\|\right\}
$$
is norm dense in $K( G^\infty_v(U),F)$. Let $\varepsilon>0$ and $f\in H^\infty_{vK}(U,F)$. Since $T_f\in K( G^\infty_v(U),F)$ by Theorem \ref{main-theo}, we can find $T\in K( G^\infty_v(U),F)$, $\lambda\in\T$ and $z\in U$ such that $\left\|T(\lambda v(z)\delta_z)\right\|=\left\|T\right\|$ and $\left\|T_f-T\right\|<\varepsilon$. By Theorem \ref{main-theo}, $T=T_{f_0}$ for some $f_0\in H^\infty_{vK}(U,F)$. Similarly, as in the proof of Lemma \ref{prop-1}, we deduce that $f_0\in H^\infty_{v\NA}(U,F)$ and $\left\|f-f_0\right\|_v<\varepsilon$. 
\end{proof}

This result can be improved as follows.

\begin{proposition}\label{teo-22}
Let $U$ be an open subset of a complex Banach space $E$ and let $v$ be a weight on $U$. Assume that $\T\At_{G^\infty_v(U)}$ is a norm-closed set of uniformly strongly exposed points of $B_{G^\infty_v(U)}$. Then $H^\infty_{vK}(U,F)$ has the $WH^\infty$-BPB property for every complex Banach space $F$.
\end{proposition}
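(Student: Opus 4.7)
The plan is to mirror the proof of Theorem \ref{teo-2} almost verbatim, replacing the use of Lemma \ref{prop-1} by Lemma \ref{teo-21}, but we must verify at each step that the relevant mappings remain in $H^\infty_{vK}(U,F)$ (i.e., that multiplication by $v$ yields a relatively compact range). Given $0<\varepsilon<1$, I would first use the uniformly strongly exposed hypothesis to fix the family $\{f_{(\lambda,z)}\}\subseteq S_{H^\infty_v(U)}$ and a $0<\delta<1$ controlling the slice diameters, then pick $0<\eta<\varepsilon$ with $(1+\varepsilon/4)(1-\eta)>1+\varepsilon(1-\delta)/4$ exactly as before, and finally fix $f\in S_{H^\infty_{vK}(U,F)}$, $\lambda\in\T$ and $z\in U$ with $v(z)\left\|f(z)\right\|>1-\eta$.

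Next I would define the perturbation
$$
g_0(y)=f(y)+\frac{\varepsilon}{4}\lambda f_{(\lambda,z)}(y)v(y)f(z)\qquad (y\in U).
$$
The estimate $\left\|f-g_0\right\|_v\le\varepsilon/4$ and the slice-membership claim (if $v(y)\left\|g_0(y)\right\|\ge v(z)\left\|g_0(z)\right\|$ then $v(y)\delta_y\in\T S(B_{G^\infty_v(U)},J_v(f_{(\lambda,z)}),\delta)$) carry over unchanged from the proof of Theorem \ref{teo-2}. The new point to check is that $g_0\in H^\infty_{vK}(U,F)$; this is immediate since
$$
v(y)g_0(y)=v(y)f(y)+\frac{\varepsilon}{4}\,\bigl(\lambda v(y)f_{(\lambda,z)}(y)\bigr)\,v(z)f(z),
$$
so $vg_0(U)$ is contained in the sum of the relatively compact set $vf(U)$ and the relatively compact set $\{t\,v(z)f(z):|t|\le\varepsilon/4\}$, hence is itself relatively compact. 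Consequently $g:=g_0/\left\|g_0\right\|_v$ lies in $S_{H^\infty_{vK}(U,F)}$ and satisfies $\left\|f-g\right\|_v\le\varepsilon/2$.

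If $v(z)\left\|g(z)\right\|=\left\|g\right\|_v$ we are done; otherwise choose $0<\varepsilon'<\min\{\varepsilon/2,\,\left\|g\right\|_v-v(z)\left\|g(z)\right\|\}$. Here is the only substantive change: instead of invoking Lemma \ref{prop-1} to norm-approximate $g$ by a norm-attaining mapping in $H^\infty_v(U,F)$, I would apply Lemma \ref{teo-21} to obtain $f_0\in S_{H^\infty_{vK}(U,F)}$ and $z_0\in U$ with $v(z_0)\left\|f_0(z_0)\right\|=1$ and $\left\|g-f_0\right\|_v<\varepsilon'$. The same triangle inequality used in Theorem \ref{teo-2} then forces $v(z_0)\left\|g_0(z_0)\right\|\ge v(z)\left\|g_0(z)\right\|$, and the slice-membership claim produces $\lambda_0\in\T$ with $\lambda_0v(z_0)\delta_{z_0}\in S(B_{G^\infty_v(U)},J_v(f_{(\lambda,z)}),\delta)$, so
$$
\left\|\lambda v(z)\delta_z-\lambda_0 v(z_0)\delta_{z_0}\right\|\le\diam\bigl(S(B_{G^\infty_v(U)},J_v(f_{(\lambda,z)}),\delta)\bigr)<\varepsilon,
$$
together with $\left\|f-f_0\right\|_v\le\left\|f-g\right\|_v+\left\|g-f_0\right\|_v<\varepsilon$.

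The only real obstacle is the verification that the perturbation stays inside the compact-range subspace $H^\infty_{vK}(U,F)$; once the rank-one nature of the corrective term makes this transparent, the rest is a mechanical transcription of the argument of Theorem \ref{teo-2} with Lemma \ref{teo-21} swapped in for Lemma \ref{prop-1}.
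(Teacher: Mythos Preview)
Your proposal is correct and follows exactly the paper's approach: the paper's proof simply observes that the auxiliary mappings $g_0$ and $f_0$ from the proof of Theorem \ref{teo-2} remain in $H^\infty_{vK}(U,F)$---the former because $v(g_0-f)$ has finite-dimensional range (your rank-one argument), the latter by invoking Lemma \ref{teo-21} in place of Lemma \ref{prop-1}. Your write-up spells out in detail precisely what the paper records in two lines.
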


\begin{proof}
It is sufficient to note that if $f\in H^\infty_{vK}(U,F)$, then the mappings $g_0,f_0\in H^\infty_v(U,F)$, involved in the proof of Theorem \ref{teo-2}, now belong to $H^\infty_{vK}(U,F)$. Indeed, $g_0$ since $v(g_0-f)$ has a finite dimensional range, and $f_0$ by the application now of Lemma \ref{teo-21}.
\end{proof}

The proof of Theorem \ref{prop-vp} shows that for any $p\geq 1$, $\T\At_{G^\infty_{v_p}(\D)}$ is a norm-closed set of uniformly strongly exposed points of $B_{G^\infty_{v_p}(\D)}$. Therefore,  Proposition \ref{teo-22} yields the following.

\begin{proposition}
$H^\infty_{v_p K}(\D,F)$ with $p\geq 1$ has the $WH^\infty$-BPB property for every complex Banach space $F$.$\hfill\qed$
\end{proposition}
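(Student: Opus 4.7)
The plan is to apply Proposition \ref{teo-22} directly, since all the necessary hypotheses have already been verified in the course of proving Theorem \ref{prop-vp}. There is essentially no new work to do here; the statement is a straightforward corollary.

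More precisely, I would first observe that in the proof of Theorem \ref{prop-vp} the following two facts were explicitly established for every $p \geq 1$: (a) the set $\T\At_{G^\infty_{v_p}(\D)}$ is a set of uniformly strongly exposed points of $B_{G^\infty_{v_p}(\D)}$, with exposing family $\{J_{v_p}(\overline{\lambda}f_z) : \lambda \in \T,\, z\in\D\}$ where $f_z(w)=(\phi_z'(w))^p$; and (b) the set $\T\At_{G^\infty_{v_p}(\D)}$ is norm-closed in $G^\infty_{v_p}(\D)$, via the compactness argument on $\T\times\overline{\D}$ together with the observation that a limit point $(\lambda_0,z_0)$ with $z_0\in\T$ would force $\phi=0$, contradicting $\|\phi\|=1$.

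Hence the hypothesis of Proposition \ref{teo-22} is satisfied with $U=\D$ and $v=v_p$. Applying that proposition yields immediately that $H^\infty_{v_p K}(\D,F)$ has the $WH^\infty$-BPB property for every complex Banach space $F$, which is exactly the claim.

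There is no real obstacle; the only thing worth being a bit careful about is the wording, so that the reader sees that we are genuinely invoking earlier work and not hiding a new argument. A one-sentence proof of the form \emph{"By the proof of Theorem \ref{prop-vp}, the hypothesis of Proposition \ref{teo-22} holds for $U=\D$ and $v=v_p$, whence the conclusion"} is sufficient, and indeed is essentially the proof already sketched in the paragraph preceding the statement.
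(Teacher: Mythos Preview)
Your proposal is correct and matches the paper's approach exactly: the paper likewise observes that the proof of Theorem \ref{prop-vp} establishes that $\T\At_{G^\infty_{v_p}(\D)}$ is a norm-closed set of uniformly strongly exposed points of $B_{G^\infty_{v_p}(\D)}$, and then simply invokes Proposition \ref{teo-22}. As you noted, this is precisely the one-line justification given in the paragraph preceding the statement, which is why the proposition carries a $\qed$ with no separate proof.
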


A reading of the proofs of Theorems \ref{prop-beta} and \ref{prop-quasi} show that the following result holds.

\begin{proposition}\label{prop-compacta}
Let $U$ be an open subset of a complex Banach space $E$ and $v$ be a weight on $U$. 
\begin{enumerate}
	\item If $H^\infty_{v}(U)$ has the $WH^\infty$-BPB property, then $H^\infty_{vK}(U,F)$ has the $WH^\infty$-BPB property for any complex Banach space $F$ with the property $\beta$. 
  \item If $H^\infty_{vNA}(U)$ is norm dense in $H^\infty_{v}(U)$, then $H^\infty_{vK\NA}(U,F)$ is norm dense in $H^\infty_{vK}(U,F)$ for any complex Banach space $F$ with the property quasi-$\beta$. $\hfill\qed$
\end{enumerate}
\end{proposition}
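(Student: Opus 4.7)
My plan is to mirror the proofs of Theorems \ref{prop-beta} and \ref{prop-quasi}, verifying at each step that the mappings constructed remain in $H^\infty_{vK}(U,F)$. The only point where a genuinely new argument is needed is the initial appeal to Zizler's theorem in the proof of Theorem \ref{prop-quasi}; I will bypass it using the compactness of $T_f$.

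The key algebraic observation, common to both parts, is that in each of the two theorems the approximating mapping (the $g$ of Theorem \ref{prop-beta}, the $f_0$ of Theorem \ref{prop-quasi}) differs from the starting mapping by a term of the form $h(y)\,y_i$, where $h\in H^\infty_v(U)$ and $y_i\in F$ are fixed. The correction thus takes values in the one-dimensional subspace $\mathbb{C}\,y_i$ and is uniformly bounded on $U$, so multiplying by $v$ gives a mapping with relatively compact range in $F$. Since the sum of two sets with relatively compact closure is itself relatively compact, if $vf$ has relatively compact range in $F$, then so does $v$ times the approximant.

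For part (i) this observation alone suffices: given $f\in S_{H^\infty_{vK}(U,F)}$, $\lambda\in\T$ and $z\in U$ with $v(z)\|f(z)\|$ sufficiently close to $1$, I run the construction in the proof of Theorem \ref{prop-beta} verbatim, applying the $WH^\infty$-BPB property of $H^\infty_v(U)$ to $f^t(y_i^*)/\|f^t(y_i^*)\|_v\in S_{H^\infty_v(U)}$ (legitimate by Proposition \ref{prop-A}). The output lies in $H^\infty_{vK}(U,F)$ by the observation, and the remaining bookkeeping --- computing $\|g\|_v$ via $g^t$ and the final normalisation --- is unchanged.

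For part (ii), the delicate step is the first line of the proof of Theorem \ref{prop-quasi}, where Zizler's Proposition 4 is applied to approximate $T_f$ by an operator whose adjoint attains its norm, a construction that need not preserve compactness. When $f\in H^\infty_{vK}(U,F)$, however, Theorem \ref{main-theo} gives $T_f\in K(G^\infty_v(U),F)$, so by Schauder's theorem $(T_f)^*$ is compact, and any such compact $(T_f)^*\colon F^*\to G^\infty_v(U)^*$ attains its norm on $B_{F^*}$: choose $y_n^*\in B_{F^*}$ with $\|(T_f)^*(y_n^*)\|\to\|(T_f)^*\|$, use the norm-relative-compactness of $(T_f)^*(B_{F^*})$ to extract a norm-convergent subsequence, and combine Alaoglu's theorem with the weak*-to-weak*-continuity of $(T_f)^*$ to produce a weak*-cluster point $y^*\in B_{F^*}$ whose image coincides with the norm-limit. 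Hence $f^t$ already attains its norm, so I take $g=f$ in the proof of Theorem \ref{prop-quasi}, invoke Lim's Theorem to locate the attainment at an extreme point $e^*\in\Ext(B_{F^*})$, and run the remainder of that argument to produce $f_0$, which again lies in $H^\infty_{vK}(U,F)$ by the common observation. The main obstacle in the whole proof is precisely this replacement for Zizler's theorem; everything else amounts to checking that neither construction ever leaves the finite-dimensional-perturbation regime.
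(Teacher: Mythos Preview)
Your proposal is correct and follows the paper's own approach, which consists of the single sentence ``A reading of the proofs of Theorems \ref{prop-beta} and \ref{prop-quasi} show that the following result holds.'' You have carried out precisely that reading, and your key algebraic observation---that the approximant differs from the initial mapping by a term $h(\cdot)\,y_i$ with $h\in H^\infty_v(U)$ and $y_i\in F$ fixed, hence by a mapping whose $v$-range is bounded in a one-dimensional subspace---is exactly what makes the constructions stay in $H^\infty_{vK}(U,F)$.

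Where you go beyond the paper is in part (ii): the paper leaves implicit how the Zizler step of Theorem \ref{prop-quasi} is to be handled in the compact setting, whereas you replace it altogether by the observation that when $T_f$ is compact, $(T_f)^*$ is compact (Schauder) and therefore already attains its norm on $B_{F^*}$, so one may take $g=f$ and skip the approximation. This is a genuine and clean simplification. Your justification via Alaoglu and weak*-to-weak* continuity of the adjoint is fine; you could shorten it further by citing Theorem \ref{teo-4-2}\,(iii): $f^t$ is bounded-weak*-to-norm continuous, hence $y^*\mapsto\|f^t(y^*)\|_v$ is continuous on the weak*-compact ball $B_{F^*}$ and attains its supremum. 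Either way, Lim's theorem then applies exactly as in the paper to locate the attainment at an extreme point, and the rest of the argument is unchanged.
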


Let us recall that a Banach space $F$ is said to be a \emph{Lindenstrauss space} if $F^*$ is isometrically isomorphic to an $L_1(\mu)$ space for some measure $\mu$. 

The next result, influenced by \cite[Theorem 4.2]{AcoBecChoCieKimLeeLoMar-14}, shows that the space of mappings $f\in H(U,F)$ such that $vf$ has a relatively compact range in a Lindenstrauss space $F$ enjoys the $WH^\infty$-BPB property whenever $H^\infty_v(U)$ also has it.

\begin{theorem}
Let $U$ be an open subset of a complex Banach space $E$ and let $v$ be a weight on $U$. Suppose that $H^\infty_v(U)$ has the $WH^\infty$-BPB property. Then $H^\infty_{vK}(U,F)$ has the $WH^\infty$-BPB property for any complex Lindenstrauss space $F$. 
\end{theorem}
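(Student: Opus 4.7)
The strategy follows the approach of \cite[Theorem 4.2]{AcoBecChoCieKimLeeLoMar-14}, translated to our context through the linearisation isometry $f\mapsto T_f$ of Theorem \ref{main-theo} and the intermediate spaces $\ell_\infty^n$. The two main inputs are: first, the classical local characterisation of Lindenstrauss spaces, namely that each finite-dimensional subspace of $F$ is contained in a finite-dimensional subspace of $F$ which is $(1+\rho)$-isomorphic to some $\ell_\infty^n$ for any prescribed $\rho>0$; and second, Proposition \ref{prop-compacta}(i) applied to $\ell_\infty^n$, which has property $\beta$ trivially (take $(e_i,e_i^*)$ with $\rho_\beta=0$), so that the hypothesis yields the $WH^\infty$-BPB property for $H^\infty_{vK}(U,\ell_\infty^n)$ with a modulus $\tilde\eta$ that depends only on the modulus of $H^\infty_v(U)$, and in particular not on $n$, as is clear from an inspection of the proof of Theorem \ref{prop-beta}.

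Given $\varepsilon\in(0,1)$, I would pick auxiliary parameters $\rho,\tilde\varepsilon>0$ small in $\varepsilon$ and let $\eta>0$ be the sought modulus. For $f\in S_{H^\infty_{vK}(U,F)}$, $\lambda\in\T$ and $z\in U$ with $v(z)\|f(z)\|>1-\eta$, invoke the local characterisation to approximate $T_f$ by a composition $B\circ A$ with $A\in K(G^\infty_v(U),\ell_\infty^n)$, $B\in L(\ell_\infty^n,F)$, $\|B\|\le 1+\rho$ and $\|B\circ A-T_f\|<\rho$. By Theorem \ref{main-theo}, $A=T_g$ for a unique $g\in H^\infty_{vK}(U,\ell_\infty^n)$, and $\|f-B\circ g\|_v<\rho$. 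Normalise $\tilde g=g/\|g\|_v\in S_{H^\infty_{vK}(U,\ell_\infty^n)}$; a routine estimate using $\|B\|\le 1+\rho$ and the proximity of $B\circ A$ to $T_f$ shows that $v(z)\|\tilde g(z)\|_\infty>1-\tilde\eta(\tilde\varepsilon)$ provided $\rho,\eta$ are chosen sufficiently small compared to $\tilde\varepsilon$. Applying Proposition \ref{prop-compacta}(i) to $\tilde g$ yields $\tilde g_0\in S_{H^\infty_{vK}(U,\ell_\infty^n)}$, $\lambda_0\in\T$ and $z_0\in U$ with $v(z_0)\|\tilde g_0(z_0)\|_\infty=1$, $\|\tilde g-\tilde g_0\|_v<\tilde\varepsilon$ and $\|\lambda v(z)\delta_z-\lambda_0 v(z_0)\delta_{z_0}\|<\tilde\varepsilon$.

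The final step is to lift $\tilde g_0$ back to $F$ by setting $f_0=(B\circ\tilde g_0)/\|B\circ\tilde g_0\|_v\in S_{H^\infty_{vK}(U,F)}$ and verifying both the proximity $\|f-f_0\|_v<\varepsilon$ and the exact attainment $v(z_0)\|f_0(z_0)\|=1$. The first follows by triangle inequality from the chain of approximations. The main obstacle is the second: because $B$ may distort norms by a factor up to $1+\rho$, a naive composition could shift the point of maximum modulus away from $z_0$. To handle this, we exploit that $\tilde g_0$ attains its norm through a coordinate functional $e_i^*$ afforded by the property-$\beta$ structure of $\ell_\infty^n$, and use the additional freedom in the Lindenstrauss characterisation to refine the factorisation so that $B$ is isometric on the line $\C e_i$; this may require enlarging the finite-dimensional subspace $H\subseteq F$ containing $B(\ell_\infty^n)$ and re-choosing the $(1+\rho)$-isomorphism $H\cong\ell_\infty^{n'}$ with the given line identified with a coordinate direction. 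After this refinement one has $v(z_0)\|B\tilde g_0(z_0)\|=v(z_0)\|\tilde g_0(z_0)\|_\infty=1=\|B\circ\tilde g_0\|_v$, so $f_0$ attains its weighted supremum norm at $z_0$, completing the proof. An alternative safety net, if the norm-attainment transfer is not exact, is to follow the almost-attainment with a small perturbation provided by Lemma \ref{teo-21} adapted to mappings factoring through $\ell_\infty^n$, absorbing the remaining error into an enlarged $\varepsilon$.
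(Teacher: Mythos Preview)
Your outline shares the paper's two key inputs---the uniformity in $n$ of the $WH^\infty$-BPB modulus for $H^\infty_{vK}(U,\ell_\infty^n)$ coming from Theorem \ref{prop-beta}, and a finite-dimensional $\ell_\infty^n$-approximation inside $F$---but the way you handle the latter creates a gap that your proposed fixes do not close.

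The paper does not use the almost-isometric local $\mathcal L_{\infty}$ characterisation; it invokes the sharper Lazar--Lindenstrauss result \cite[Theorem 3.1]{LazLin-71} that in a Lindenstrauss space every finite set is $\delta$-close to a subspace $F_0\subseteq F$ which is \emph{exactly isometric} to some $\ell_\infty^m$, and such a subspace is automatically $1$-complemented (being an injective space). One then projects $f$ onto $F_0$ by a norm-one projection $P$, applies the $WH^\infty$-BPB property for $H^\infty_{vK}(U,F_0)$, and obtains $f_0\in S_{H^\infty_{vK}(U,F_0)}\subseteq S_{H^\infty_{vK}(U,F)}$ with $v(z_0)\|f_0(z_0)\|=1$ directly. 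No lifting, no distortion, no attainment problem.

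By contrast, your $(1+\rho)$-isomorphism $B\colon\ell_\infty^n\to F$ forces you to repair exact norm attainment after composing, and neither of your repairs works. First, forcing $B$ to be isometric on the coordinate line $\C e_i$ is not enough: from the proof of Theorem \ref{prop-beta} you only know $|e_i^*(\tilde g_0(z_0))|=\|\tilde g_0(z_0)\|_\infty$, not that $\tilde g_0(z_0)\in\C e_i$, so you cannot infer $\|B\tilde g_0(z_0)\|=\|\tilde g_0(z_0)\|_\infty$; and even if you could, you still need $v(y)\|B\tilde g_0(y)\|\le 1$ for all $y$, which $\|B\|\le 1+\rho$ does not give. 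Second, the ``safety net'' via Lemma \ref{teo-21} is unavailable here: that lemma assumes $\T\At_{G^\infty_v(U)}$ is a norm-closed set of uniformly strongly exposed points, a hypothesis absent from the theorem you are proving. Replace the $(1+\rho)$-factorisation by the isometric Lazar--Lindenstrauss subspace plus a norm-one projection, and the obstacle disappears.
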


\begin{proof}
Let $0<\varepsilon<1$ and let $\varepsilon\mapsto\eta(\varepsilon)$ be the function from $\R^+$ into $\R^+$ which gives the $WH^\infty$-BPB property for $H^\infty_v(U)$. Since $\ell_\infty^n$ has the property $\beta$ for every $n\in\mathbb{N}$ with $\rho=0$, Proposition \ref{prop-compacta} assures that $H^\infty_{vK}(U,\ell_\infty^n)$ enjoys the $WH^\infty$-BPB property with the same function $\varepsilon\mapsto\eta(\varepsilon)$. Note in the proof of Theorem \ref{prop-beta} that if $H^\infty_v(U)$ has the $WH^\infty$-BPB property by a function $\varepsilon\mapsto\eta(\varepsilon)$, then $H^\infty_v(U,F)$ enjoys this property by the same function $\varepsilon\mapsto\eta(\varepsilon)$ which only depends on $U$, but not on the space $Y$ having the property $\beta$. Take  
$$
\eta'(\varepsilon)=\min\left\{\frac{\varepsilon}{4},\eta\left(\frac{\varepsilon}{2}\right)\right\}>0,
$$
and let $f\in S_{H^\infty_{vK}(U,F)}$, $\lambda\in\T$ and $z\in U$ be so that $v(z)\left\|f(z)\right\|>1-\eta'(\varepsilon)$. Choose  
$$
0<\delta<\frac{1}{4}\min\left\{\frac{\varepsilon}{4},v(z)\left\|f(z)\right\|-1+\eta\left(\frac{\varepsilon}{2}\right)\right\}
$$
and let $\{y_1,\ldots,y_n\}$ be a $\delta$-net of $T_f(B_{G^\infty_v(U)})$. By \cite[Theorem 3.1]{LazLin-71}, there exist a natural number $m$ and a subspace $F_0\subseteq F$, isometric to $\ell_\infty^m$, such that $d(y_i,F_0)<\delta$ for all $i\in\{1,\ldots,n\}$. Let $P\colon F\to F_0$ be a surjective projection with $\left\|P\right\|=1$. 

We claim that $\left\|f-P\circ f\right\|_v\leq 4\delta$. Indeed, fix $\phi\in B_{G^\infty_v(U)}$ and so $\left\|T_f(\phi)-y_i\right\|<\delta$ for some $i\in\{1,\ldots,n\}$. Let $y_0\in F_0$ be such that $\left\|y_0-y_i\right\|<\delta$. Then we have 
\begin{align*}
\left\|T_f(\phi)-P(T_f(\phi))\right\|&\leq\left\|T_f(\phi)-y_i\right\|+\left\|y_i-y_0\right\|+\left\|y_0-P(T_f(\phi))\right\|\leq 2\delta+\left\|P(y_0)-P(T_f(\phi))\right\|\\
&\leq 2\delta+\left\|y_0-T_f(\phi)\right\|\leq 2\delta+\left\|y_0-y_i\right\|+\left\|y_i-T_f(\phi)\right\|<4\delta,
\end{align*}
and thus $\left\|T_f-P\circ T_f\right\|\leq 4\delta$. Since $T_f-P\circ T_f\in L(G^\infty_v(U),F)$ and $(T_f-P\circ T_f)\circ\Delta_v=f-P\circ f$, it follows that $T_f-P\circ T_f=T_{f-P\circ f}$ by Theorem \ref{main-theo}. Hence $\left\|f-P\circ f\right\|_v\leq 4\delta$ and this proves our claim. This implies that $\left\|P\circ f\right\|_v\geq \left\|f\right\|_v-4\delta=1-4\delta>0$. Moreover, we have 
$$
v(z)\left\|f(z)-P(f(z))\right\|=\left\|T_f(v(z)\delta_z)-P(T_f(v(z)\delta_z))\right\|<4\delta
$$ 
and therefore,  
$$
v(z)\left\|P(f(z))\right\|>v(z)\left\|f(z)\right\|-4\delta>1-\eta\left(\frac{\varepsilon}{2}\right).
$$
Consequently, $g=(P\circ f)/\left\|P\circ f\right\|_v\colon U\to F_0$ is in $S_{H^\infty_v(U,F)}$ with $v(z)\left\|g(z)\right\|>1-\eta(\varepsilon/2)$. Since $H^\infty_{vK}(U,\ell_\infty^m)$ has the $WH^\infty$-BPB property by the function $\varepsilon\mapsto\eta(\varepsilon)$ and $F_0\subseteq F$ is isometrically isomorphic to $\ell^\infty_m$, there are a mapping $f_0\in H^\infty_{vK}(U,F_0)\subseteq H^\infty_{vK}(U,F)$ with $\left\|f_0\right\|_v=1$, a point $z_0\in U$ and a scalar $\lambda_0\in\T$ so that $v(z_0)\left\|f_0(z_0)\right\|=1$, $\left\|f_0-g\right\|_v<\varepsilon/2$ and $\left\|\lambda v(z)\delta_z-\lambda_0v(z_0)\delta_{z_0}\right\|<\varepsilon/2$. Lastly, we have
$$
\left\|f-f_0\right\|_v\leq\left\|f-P\circ f\right\|_v+\left\|P\circ f-g\right\|_v+\left\|g-f_0\right\|_v<\frac{\varepsilon}{2}+1-\left\|P\circ f\right\|_v+4\delta\leq\frac{\varepsilon}{2}+8\delta<\varepsilon .
$$
\end{proof}

Our next result allows us to transfer the $WH^\infty$-BPB property for mappings in $H^\infty_{vK}$ from range spaces to domain spaces. Its proof is based on \cite[Lemma 3.4]{JohWol-79}.

\begin{proposition}\label{prop-santos}
Let $U$ be an open subset of a complex Banach space $E$, let $v$ be a weight on $U$ and let $F$ be a complex Banach space. Suppose that there exists a net of norm-one projections $(P_i)_{i\in I}\subseteq L(F,F)$ such that $(P_i(y))_{i\in I}$ converges in norm to $y$ for every $y\in F$. If there is a function $\eta\colon\R^+\to\R^+$ such that for every $i\in I$, $H^\infty_{vK}(U,P_i(F))$ has the $WH^\infty$-BPB property by the function $\eta$, then $H^\infty_{vK}(U,F)$ has the $WH^\infty$-BPB property.
\end{proposition}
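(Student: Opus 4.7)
The plan is to exploit the relative compactness of $(vf)(U)$: a net of norm-one operators $(P_i)_{i\in I}$ converging pointwise to the identity on $F$ converges uniformly on every norm-compact subset of $F$ (by a finite $\varepsilon$-net argument combined with the directedness of $I$), so that $P_i\circ f$ approximates $f$ arbitrarily well in $\|\cdot\|_v$ whenever $vf$ has relatively compact range.

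First I would record the key approximation step: for every $f\in H^\infty_{vK}(U,F)$, one has $\|f-P_i\circ f\|_v\to 0$. Indeed, the set $K:=\overline{(vf)(U)}$ is norm compact in $F$ by hypothesis, so the contractions $P_i$ converge uniformly to the identity on $K$; since $v(x)\|f(x)-P_i(f(x))\|=\|v(x)f(x)-P_i(v(x)f(x))\|$ and $v(x)f(x)\in K$ for every $x\in U$, the conclusion follows. I would also observe that $P_i\circ f\in H^\infty_{vK}(U,P_i(F))$, since $v\cdot(P_i\circ f)(U)=P_i((vf)(U))$ is relatively compact in $P_i(F)$.

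Next, given $0<\varepsilon<1$, set
$$
\eta'(\varepsilon):=\tfrac{1}{2}\min\{\eta(\varepsilon/2),\,\varepsilon/4\},
$$
and take $f\in S_{H^\infty_{vK}(U,F)}$, $\lambda\in\T$, $z\in U$ with $v(z)\|f(z)\|>1-\eta'(\varepsilon)$. By the approximation step, pick $i\in I$ with $\|f-P_i\circ f\|_v<\eta'(\varepsilon)$ and let $\alpha_i:=\|P_i\circ f\|_v$, so $|1-\alpha_i|<\eta'(\varepsilon)$ and hence $\alpha_i>0$. Then $g:=(P_i\circ f)/\alpha_i$ is a norm-one element of $H^\infty_{vK}(U,P_i(F))$ and, since $\alpha_i\leq 1$,
$$
v(z)\|g(z)\|\geq\frac{v(z)\|f(z)\|-\|f-P_i\circ f\|_v}{\alpha_i}>1-2\eta'(\varepsilon)\geq 1-\eta(\varepsilon/2).
$$
Applying the $WH^\infty$-BPB property for $H^\infty_{vK}(U,P_i(F))$ (granted by hypothesis) at $(g,\lambda,z)$ with accuracy $\varepsilon/2$ yields $f_0\in S_{H^\infty_{vK}(U,P_i(F))}$, $\lambda_0\in\T$ and $z_0\in U$ with $v(z_0)\|f_0(z_0)\|=1$, $\|g-f_0\|_v<\varepsilon/2$, and $\|\lambda v(z)\delta_z-\lambda_0v(z_0)\delta_{z_0}\|<\varepsilon/2<\varepsilon$. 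Viewing $f_0$ as a norm-one element of $H^\infty_{vK}(U,F)$ via the isometric inclusion $P_i(F)\subseteq F$ and using $\|P_i\circ f-g\|_v=|\alpha_i-1|$, the triangle inequality gives
$$
\|f-f_0\|_v\leq\|f-P_i\circ f\|_v+|\alpha_i-1|+\|g-f_0\|_v<2\eta'(\varepsilon)+\varepsilon/2<\varepsilon,
$$
as required.

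The only genuine difficulty lies in the uniform approximation $\|f-P_i\circ f\|_v\to 0$: this is precisely where the assumption that $vf$ has relatively compact range is essential, and the proposition would fail for a mere $f\in H^\infty_v(U,F)$. After this step, the remainder is routine bookkeeping with the triangle inequality and a careful choice of $\eta'$.
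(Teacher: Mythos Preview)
Your proof is correct and follows essentially the same route as the paper's: use relative compactness of $(vf)(U)$ to approximate $f$ by $P_i\circ f$ in the $\|\cdot\|_v$-norm, apply the uniform $WH^\infty$-BPB property in the range $P_i(F)$, and conclude by the triangle inequality. The only cosmetic difference is that you explicitly normalize $P_i\circ f$ before invoking the BPB property (which is arguably more precise, since the definition is stated for unit-norm mappings), whereas the paper applies it directly to $P_i\circ f$ with $\|P_i\circ f\|_v\leq 1$; your choice of $\eta'$ accommodates the extra $|\alpha_i-1|$ term this produces.
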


\begin{proof}
Let $0<\varepsilon<1$ and put 
$$
\eta'(\varepsilon)=\frac{1}{2}\min\left\{\varepsilon,\eta\left(\frac{\varepsilon}{2}\right)\right\}.
$$
Let $f\in S_{H^\infty_{vK}(U,F)}$, $\lambda\in\T$ and $z\in U$ such that $v(z)\left\|f(z)\right\|>1-\eta'(\varepsilon)$. By the relative compactness of $(vf)(U)$ in $F$, we can find a set $\{y_1,\ldots,y_n\}\subseteq F$ such that 
$$
\min\left\{\left\|v(x)f(x)-y_j\right\|\colon 1\leq j\leq n\right\}<\frac{\eta'(\varepsilon)}{3}
$$
for every $x\in U$. By hypothesis, there exists $i\in I$ such that 
$$
\left\|P_i(y_j)-y_j\right\|<\frac{\eta'(\varepsilon)}{3}\qquad (j=1,\ldots,n).
$$
Given $x\in U$, we have 
\begin{align*}
\left\|v(x)(f(x)-Q_{i}(f(x)))\right\|&\leq \left\|v(x)f(x)-y_j\right\|+\left\|y_j-Q_{i}(y_j)\right\|+\left\|Q_{i}(y_j)-Q_{i}(v(x)f(x))\right\|\\
&<2\left\|v(x)f(x)-y_j\right\|+\frac{\eta'(\varepsilon)}{3}
\end{align*}
for all $1\leq j\leq n$, and thus 
$$
\left\|v(x)(f(x)-Q_{i}(f(x)))\right\|\leq 2\min\left\{\left\|v(x)f(x)-y_j\right\|\colon 1\leq j\leq n\right\}+\frac{\eta'(\varepsilon)}{3}<\eta'(\varepsilon).
$$
Therefore, $\left\|Q_{i}\circ f-f\right\|_v\leq\eta'(\varepsilon)$. Clearly, $g=Q_{i}\circ f\in H^\infty_{vK}(U,Q_{i}(F))$ with $\left\|g\right\|_v\leq 1$ and 
$$
v(z)\left\|g(z)\right\|\geq v(z)\left\|f(z)\right\|-\left\|Q_{i}\circ f-f\right\|_v>1-2\eta'(\varepsilon)\geq 1-\eta\left(\frac{\varepsilon}{2}\right).
$$
Since $H^\infty_{vK}(U,P_i(F))$ has the $WH^\infty$-BPB property by the function $\eta$, we can take a mapping $h_0\in S_{H^\infty_{vK}(U,P_i(F))}$, a point $z_0\in U$ and a scalar $\lambda_0\in\T$ such that $v(z_0)\left\|h_0(z_0)\right\|=1$, $\left\|h_0-g\right\|_v<\varepsilon/2$ and $\left\|\lambda v(z)\delta_z-\lambda_0v(z_0)\delta_{z_0}\right\|<\varepsilon/2$. Finally, take $f_0=\iota\circ h_0$, where $\iota$ is the inclusion operator from $P_i(F)$ into $F$. Clearly, $f_0\in S_{H^\infty_{vK}(U,F)}$ with $v(z_0)\left\|f_0(z_0)\right\|=1$ and 
$$
\left\|f_0-f\right\|_v\leq\left\|f_0-g\right\|_v+\left\|g-f\right\|_v=\left\|h_0-g\right\|_v+\left\|g-f\right\|_v<\frac{\varepsilon}{2}+\eta'(\varepsilon)\leq\varepsilon .
$$
\end{proof}

With a similar proof to that of Proposition \ref{prop-santos}, we can state the analogue for the norm density of $H^\infty_{vK\NA}(U,F)$ in $H^\infty_{vK}(U,F)$. 

\begin{proposition}\label{prop-santos-2}
Let $U$ be an open subset of a complex Banach space $E$, $v$ be a weight on $U$ and $F$ be a complex Banach space. Suppose that there exists a net of norm-one projections $(P_i)_{i\in I}\subseteq L(F,F)$ such that $(P_i(y))_{i\in I}$ converges in norm to $y$ for every $y\in F$. If $H^\infty_{vK\NA}(U,P_i(F))$ is norm dense in $H^\infty_{vK}(U,P_i(F))$ for every $i\in I$, then $H^\infty_{vK\NA}(U,F)$ is norm dense in $H^\infty_{vK}(U,F)$. $\hfill\qed$
\end{proposition}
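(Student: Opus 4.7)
The plan is to imitate the proof of Proposition \ref{prop-santos}, stripping away the BPB quantitative data (the function $\eta$ and the atom approximation in $G^\infty_v(U)$) and keeping only the norm-density step. The argument naturally splits into three stages: approximate $f$ uniformly by $P_i\circ f$ for a well chosen index $i$, apply the density hypothesis inside $H^\infty_{vK}(U,P_i(F))$, and transfer the resulting norm-attaining mapping back into $F$ via the inclusion.

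In the first stage I would fix $\varepsilon>0$ and $f\in H^\infty_{vK}(U,F)$, pick a finite $\varepsilon/6$-net $\{y_1,\dots,y_n\}$ of the relatively compact set $(vf)(U)$, and use the pointwise convergence $(P_i(y))_{i\in I}\to y$ on this finite set to choose an index $i\in I$ with $\|P_i(y_j)-y_j\|<\varepsilon/6$ for all $j$. A routine three-term triangle inequality together with $\|P_i\|=1$ then yields $\|f-P_i\circ f\|_v\le \varepsilon/2$. In the second stage, since $P_i$ is bounded and $(vf)(U)$ is relatively compact, $g:=P_i\circ f$ belongs to $H^\infty_{vK}(U,P_i(F))$, and the hypothesis provides some $h_0\in H^\infty_{vK\NA}(U,P_i(F))$ with $\|h_0-g\|_v<\varepsilon/2$, attaining its weighted supremum norm at some $z_0\in U$. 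In the third stage I set $f_0=\iota\circ h_0$, where $\iota\colon P_i(F)\hookrightarrow F$ is the inclusion; a final triangle inequality gives $\|f-f_0\|_v<\varepsilon$.

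The only point that requires a small verification is that $f_0$ still attains its weighted supremum norm at $z_0$. This is immediate because the norm-one projection $P_i$ makes $P_i(F)$ sit isometrically inside $F$, so $\iota$ preserves norms and hence $\|f_0\|_v=\|h_0\|_v=v(z_0)\|h_0(z_0)\|=v(z_0)\|f_0(z_0)\|$; in particular $f_0\in H^\infty_{vK\NA}(U,F)$. No other step of Proposition \ref{prop-santos} needs modification, since the ``$\lambda v(z)\delta_z$'' coordinate, which is the main source of extra bookkeeping in the BPB version, is absent in a pure density statement. The mildly delicate piece of the plan is therefore not a new estimate, but the cleanness of the transfer along $\iota$, which rests on nothing more than the isometric nature of the embedding induced by the norm-one projection.
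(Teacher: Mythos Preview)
Your proposal is correct and follows exactly the approach the paper intends: it explicitly says the proof is ``similar'' to that of Proposition \ref{prop-santos}, and your three-stage adaptation (net in $(vf)(U)$, choice of $i$, density in $H^\infty_{vK}(U,P_i(F))$, inclusion back to $F$) is precisely that simplification. The only extra remark you add --- that $\iota$ is an isometry so norm attainment is preserved --- is a helpful observation that the paper leaves implicit.
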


A consequence of Proposition \ref{prop-santos-2} yields the norm density of $H^\infty_{vK\NA}(U,F)$ whenever $F$ is a predual of a complex $L_1(\mu)$-space.

\begin{corollary}
Let $U$ be an open subset of a complex Banach space $E$ and let $v$ be a weight on $U$. Suppose that $H^\infty_{v\NA}(U)$ is norm dense in $H^\infty_{v}(U)$. Then $H^\infty_{vK\NA}(U,F)$ is norm dense in $H^\infty_{vK}(U,F)$ for any complex Lindenstrauss space $F$. $\hfill\qed$
\end{corollary}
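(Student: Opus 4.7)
The plan is to reduce the statement to Proposition~\ref{prop-santos-2} by producing, on any complex Lindenstrauss space $F$, a suitable net of norm-one projections onto $\ell_\infty^n$-subspaces, and then invoking Proposition~\ref{prop-compacta}(ii) on each of these finite-dimensional pieces.

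First, I would exhibit a net $(P_i)_{i\in I}\subseteq L(F,F)$ of norm-one projections whose ranges $P_i(F)$ are each isometric to some $\ell_\infty^{n_i}$ and such that $P_i(y)\to y$ in norm for every $y\in F$. I would take as $I$ the directed set of pairs $(S,\delta)$, with $S\subseteq F$ finite and $\delta>0$, ordered by $(S_1,\delta_1)\leq(S_2,\delta_2)$ iff $S_1\subseteq S_2$ and $\delta_2\leq\delta_1$. For each $i=(\{y_1,\ldots,y_n\},\delta)$, the Lazar--Lindenstrauss theorem \cite[Theorem 3.1]{LazLin-71} (already used in the previous proof) furnishes a subspace $F_i\subseteq F$ isometric to some $\ell_\infty^{n_i}$ with $d(y_j,F_i)<\delta$ for all $j$, and the Lindenstrauss-space structure of $F$ additionally provides a norm-one projection $P_i\colon F\to F_i$. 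A direct triangle-inequality estimate then shows that once the index is refined past $(\{y\},\delta)$, one has $\|P_i(y)-y\|<2\delta$, yielding the strong convergence $P_i(y)\to y$ for each $y\in F$.

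Second, each range space $P_i(F)\cong\ell_\infty^{n_i}$ has the property $\beta$ with constant $\rho=0$ (via the canonical basis of $\ell_\infty^{n_i}$), and therefore it has the property quasi-$\beta$. Combining this with the hypothesis that $H^\infty_{v\NA}(U)$ is norm dense in $H^\infty_v(U)$, Proposition~\ref{prop-compacta}(ii) gives that $H^\infty_{vK\NA}(U,P_i(F))$ is norm dense in $H^\infty_{vK}(U,P_i(F))$ for every $i\in I$.

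Finally, I would feed the net $(P_i)_{i\in I}$ into Proposition~\ref{prop-santos-2} to conclude that $H^\infty_{vK\NA}(U,F)$ is norm dense in $H^\infty_{vK}(U,F)$. The main obstacle is the first step: assembling the net of norm-one projections with the required strong convergence. The existence of the individual projections onto $\ell_\infty^n$-subspaces is classical for Lindenstrauss spaces, but the directed-set bookkeeping must be set up so that the approximation holds pointwise on all of $F$; once this is in place, the remaining two steps are immediate applications of the previously established propositions.
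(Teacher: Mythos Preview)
Your proposal is correct and follows essentially the same route as the paper: use Lazar--Lindenstrauss to approximate finite sets by $\ell_\infty^{n}$-subspaces, note these are one-complemented and have property $\beta$, invoke Proposition~\ref{prop-compacta}(ii) on each piece, and then apply Proposition~\ref{prop-santos-2}. The only difference is that you spell out the directed-set bookkeeping for the net $(P_i)$ explicitly, whereas the paper leaves this implicit; one minor remark is that the norm-one projection onto $F_i$ comes from the injectivity of $\ell_\infty^{n_i}$ rather than from the Lindenstrauss-space structure of $F$ as you phrase it.
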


\begin{proof}
It suffices to note that every finite subset of a Lindenstrauss space is ``almost'' contained in a subspace of it which is isometrically isomorphic to an $\ell^n_\infty$ space (see \cite[Theorem 3.1]{LazLin-71}). Since all these subspaces are one-complemented and have the property $\beta$, Proposition \ref{prop-compacta} (ii) gives the hypothesis of Proposition \ref{prop-santos-2}.
\end{proof}

\begin{comment}
We recall that a Banach space $E$ is said to have the approximation property if given a compact set $K\subseteq E$ and $\varepsilon>0$, there is finite-rank operator $T\in L(E,E)$ such that $\left\|T(x)-x\right\|<\varepsilon$ for every $x\in K$. %The approximation property was introduced by Grothendieck \cite{g}, who proved that a dual Banach space $E^*$ has the approximation property if and only if given a Banach space $F$, an operator $S\in K(E,F)$ and $\varepsilon>0$, there is an operator $T\in\F(E,F)$ such that $\left\|T-S\right\|<\varepsilon$. The following preliminary result, based in [27, Proposition 4.4], is completely elemental.

\begin{proposition}
Let $U$ be an open subset of a complex Banach space $E$, let $v$ be a weight on $U$ and let $F$ be a complex Banach space with the approximation property. Suppose that for every finite-dimensional subspace $F_0$ of $F$, there exists a closed subspace $F_1$ with $F_0\subseteq F_1\subseteq F$ such that $H^\infty_{vK\NA}(U,F_1)$ is norm dense in $H^\infty_{vK}(U,F_1)$. Then $H^\infty_{vK\NA}(U,F)$ is norm dense in $H^\infty_{vK}(U,F)$.
\end{proposition}

\begin{corollary}
Let $U$ be an open subset of a complex Banach space $E$, let $v$ be a weight on $U$ and let $F$ be a polyhedral complex Banach space with the approximation property. If $H^\infty_{v\NA}(U)$ is norm dense in $H^\infty_{v}(U)$, then $H^\infty_{vK\NA}(U,F)$ is norm dense $H^\infty_{vK}(U,F)$. 
\end{corollary}

\begin{proof}
%If Y is a polyhedral Banach space (i.e. for every finite-dimensional subspace its unit ball is the convex hull of finitely many points), then every finite-dimensional subspace of Y has property $\beta$, so Proposition 4.20 and Proposition 4.7 give us the following result.
\end{proof}
\end{comment}

\section*{Statements \& Declarations} 

\textbf{Author contributions.} All the authors have the same amount of contribution.

\textbf{Funding.} The first two authors were partially supported by Junta de Andaluc\'{\i}a grant FQM194. The first author was supported by grant PID2021-122126NB-C31 funded by ``ERDF A way of making Europe'' and by MCIN/AEI/10.13039/501100011033. The third author was supported by the Spanish Ministry of Science and Innovation grant TED2021-129189B-C21 and by the aid program for the Spanish University System requalification of the Spanish Ministry of Universities and the European Union-Next GenerationEU.

\textbf{Competing Interests.} The authors have no relevant financial or non-financial interests to disclose.

\textbf{Data Availability.} Data sharing is not applicable to this article as no data sets were generated or analyzed during the current study.

\textbf{Acknowledgements.} The authors are very grateful to Professor Jos\'e Bonet for his helpful comments on some results of this paper.

%%%%%%%%%%%%%%%%%%%%%%%%%%%%%%%%%%%%%%%%%%%%%%%%%%%%%%%%%%%%%%%%%%%%%%%%%%%%%%%%%%%%%%%%%%%%%%%%%%%%%%%%%%%%%%%%%%%%%%%%%%%%%%%%%%%%%%%%%%%%%%%%%%%%%%%%%%%%%

\end{document}